\documentclass[12pt]{amsart}

\usepackage{amsmath}
\usepackage{amssymb}
\usepackage{amsthm}
\usepackage{amscd}
\usepackage{xypic}

\swapnumbers \textwidth=16.00cm \textheight=22cm \topmargin=0.00cm
\oddsidemargin=0.00cm \evensidemargin=0.00cm \headheight=14.4pt
\headsep=1cm \numberwithin{equation}{section}
\hyphenation{semi-stable} \emergencystretch=10pt


\def\P{{\mathbb P}}

\def\Z{{\mathbb Z}}

\newtheorem{theorem}{Theorem}[section]
\newtheorem{lemma}[theorem]{Lemma}
\newtheorem{proposition}[theorem]{Proposition}
\newtheorem{corollary}[theorem]{Corollary}

\theoremstyle{definition}
\newtheorem{definition}[theorem]{Definition}
\newtheorem{definition and remark}[theorem]{Definition and Remark}
\newtheorem{remark}[theorem]{Remark}
\newtheorem{remark and definition}[theorem]{Remark and Definition}
\newtheorem{remark and notation}[theorem]{Remark and Notation}
\newtheorem{notation and remark}[theorem]{Notation and Remark}
\newtheorem{notation and convention}[theorem]{Notation and Convention}

\newtheorem{notation and remarks}[theorem]{Notation and Remarks}
\newtheorem{notation and reminder}[theorem]{Notation and Reminder}

\newtheorem{construction and examples}[theorem]{Construction and Examples}

\newtheorem{problem and remark}[theorem]{Problem and Remark}

\newcommand\Hom{\operatorname{Hom}}

\title{Some remarks on the $\mathcal{K}_{p,1}$ Theorem}

\begin{document}

\author{Yeongrak Kim}
\address{Department of Mathematics, Pusan National University, Busan 46241, Republic of Korea,}
\address{Institute of Mathematical Science, Pusan National University, Busan 46241, Republic of Korea}
\email{yeongrak.kim@pusan.ac.kr}

\author{Hyunsuk Moon}
\address{Department of Mathematics, Korea Institute for Advanced Study, Seoul 02455, Republic of Korea}
\
\email[Correspoinding Author]{hsmoon87@kias.re.kr}

\author{Euisung Park}
\address{Department of Mathematics, Korea University, Seoul 02841, Republic of Korea}
\email{euisungpark@korea.ac.kr}

\date{Seoul, \today}

\subjclass[2]{Primary: 14N05, 14N25}

\keywords{$\mathcal{K}_{p,1}$ Theorem, variety of minimal degree, del Pezzo variety, variety of almost minimal degree}

\thispagestyle{empty} \maketitle

\setcounter{page}{1}

\begin{abstract}
Let $X$ be a non-degenerate projective irreducible variety of dimension $n \ge 1$, degree $d$, and codimension $e \ge 2$ over an algebraically closed field $\mathbb{K}$ of characteristic $0$. Let $\beta_{p,q} (X)$ be the $(p,q)$-th graded Betti number of $X$. M. Green proved the celebrating $\mathcal K_{p,1}$-theorem about the vanishing of $\beta_{p,1} (X)$ for high values for $p$ and potential examples of nonvanishing graded Betti numbers. Later, Nagel-Pitteloud and Brodmann-Schenzel classified varieties with nonvanishing $\beta_{e-1,1}(X)$. It is clear that $\beta_{e-1,1}(X) \neq 0$ when there is an $(n+1)$-dimensional variety of minimal degree containing $X$, however, this is not always the case as seen in the example of the triple Veronese surface in $\mathbb{P}^9$. 

In this paper, we completely classify varieties $X$ with nonvanishing $\beta_{e-1,1}(X) \neq 0$ such that $X$ does not lie on an $(n+1)$-dimensional variety of minimal degree. They are exactly cones over smooth del Pezzo varieties whose Picard number is $\le n-1$.
\end{abstract}

\setcounter{page}{1}

\section{Introduction}\label{sect:Introduction}
\noindent It is a fundamental challenge in algebraic geometry to understand the interplay between the geometric properties of a projective variety embedded in a projective space and the algebraic properties such as the graded Betti numbers of its homogeneous ideal. Modern research in this direction began with the pioneering paper \cite{Gr2} by M. Green. To be precise, let $X \subset \P^{n+e}$ be a non-degenerate projective irreducible variety of dimension $n \ge 1$, codimension $e \geq 2$ and degree $d$ over an algebraically closed field $\mathbb{K}$ of characteristic zero. Let $R$ be the homogeneous coordinate rings of $\P^{n+e}$. Also let $I(X)$ and $R(X) := R/I(X)$ be respectively the homogeneous ideal and the homogeneous coordinate ring of $X$ in $\P^{n+e}$. For each $p,q \ge 1$, the $(p,q)$-th graded Betti number of $X$ is defined by
\begin{equation*}
\beta_{p,q} (X) = \dim_{\mathbb{K}} ~ \mathcal{K}_{p,q} (R(X),R_1 )
\end{equation*}
where $\mathcal{K}_{p,q} (R(X),R_1 )$ is the $(p,q)$th Koszul cohomology group of $R(X)$ as a graded $R$-module.

In \cite{Gr2}, M. Green proves the following fundamental result.

\begin{theorem}[M. Green's $\mathcal{K}_{p,1}$ Theorem]\label{thm:Kp1}
Let $X \subset \P^{n+e}$ be as above. Then

\renewcommand{\descriptionlabel}[1]%
             {\hspace{\labelsep}\textrm{#1}}
\begin{description}
\setlength{\labelwidth}{13mm}
\setlength{\labelsep}{1.5mm}
\setlength{\itemindent}{0mm}
\item[{\rm (1)}] $\beta_{p,1} (X) = 0$ for $p > e$.

\item[{\rm (2)}] $\beta_{e,1} (X) > 0$ if and only if $d = e+1$.

\item[{\rm (3)}] $\beta_{e-1,1} (X) > 0$ if and only if either $d \leq e+2$ and $X$ is not a cone over the isomorphic projection of the Veronese surface in $\P^5$ or else $d \geq e+3$ and $X$ lies on an $(n+1)$-fold of minimal degree.
\end{description}
\end{theorem}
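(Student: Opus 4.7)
The plan is to recast everything in terms of Koszul cohomology via the identification $\beta_{p,1}(X) = \dim \mathcal{K}_{p,1}(R(X), R_1)$, computed as the middle cohomology of the short complex $\Lambda^{p+1} V \to \Lambda^{p} V \otimes V \to \Lambda^{p-1} V \otimes R(X)_2$, where $V := R_1$ and non-degeneracy has been used to identify $R(X)_1 = V$. This reduces each assertion to controlling the image of the quadratic relations of $X$ inside the Koszul complex, and the three parts then correspond to distinct regimes of the codimension $e$.

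For Part (1), I would run Green's generic hyperplane section argument: a sufficiently general $\ell \in V$ yields an exact sequence connecting $\mathcal{K}_{p,1}(X)$ to $\mathcal{K}_{p,1}(X \cap H)$ together with a controllable error term, so by induction on $n$ the problem reduces to a set of $d$ points in linearly general position in $\P^e$, where a direct Koszul computation kills everything in bidegree $(p,1)$ for $p > e$. Part (2) follows the same hyperplane reduction: the hypothesis $\beta_{e,1}(X) > 0$ forces, at the bottom of the induction, the maximal possible number of quadrics through the $d$ points, which in turn forces these points to lie on a rational normal curve, giving $d = e+1$. The converse is the classical fact that the Eagon--Northcott resolution of a variety of minimal degree realizes $\beta_{e,1}(X) > 0$.

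Part (3) is the main obstacle. To a nonzero class $\alpha \in \mathcal{K}_{e-1,1}(R(X), R_1)$ I would associate its \emph{syzygy scheme} $\mathrm{Syz}(\alpha) \subset \P^{n+e}$, defined as the zero locus of the quadrics appearing in a lift of $\alpha$ to $\Lambda^{e-1} V \otimes R(X)_2$. The crux is to show that when $d \geq e+3$, $\mathrm{Syz}(\alpha)$ is an irreducible $(n+1)$-dimensional variety of minimal degree containing $X$; this requires a delicate scheme-theoretic analysis of these quadrics together with an application of Part (2) to $\mathrm{Syz}(\alpha)$, now of codimension $e-1$. Conversely, any $X$ lying on such an $(n+1)$-fold inherits $\beta_{e-1,1}(X) > 0$ by restricting the Eagon--Northcott resolution of the ambient fold. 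Finally, for $d \leq e+2$ one proceeds by a case analysis based on Fujita's classification of polarized varieties of $\Delta$-genus $\leq 1$, verifying that the only $X$ in this range with $\beta_{e-1,1}(X) = 0$ is the cone over the isomorphic projection of the Veronese surface in $\P^5$, where vanishing is checked by a direct computation of the minimal free resolution.
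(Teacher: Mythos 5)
You should first be aware that the paper does not prove Theorem \ref{thm:Kp1} at all: parts (1) and (2) are quoted from Green \cite{Gr2}, and part (3) is assembled from citations --- the Eagon--Northcott computation \eqref{eq:VMD} for $d=e+1$, \cite[Theorem 3.5]{NP} for $d\geq e+3$, \cite[Theorem 1]{Hoa} for the arithmetically Cohen--Macaulay case of $d=e+2$, and Theorem \ref{thm:BS} of Brodmann--Schenzel for the remaining case. Judged as a reconstruction of those cited proofs, your outline identifies the right machinery: the generic hyperplane section reduction to points in linearly general position for (1) and (2), the syzygy scheme attached to a class in $\mathcal{K}_{e-1,1}(R(X),R_1)$ for the forward direction of (3) when $d\geq e+3$ (this is exactly the circle of ideas from \cite[Section 3-(c)]{Gr2} that the paper reuses in Section \ref{sect:SyzygyScheme}), and restriction of the Eagon--Northcott resolution for the converse.

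However, as a proof the proposal has genuine gaps. First, the assertion that for $d\geq e+3$ the syzygy scheme of a nonzero class in $\mathcal{K}_{e-1,1}$ is an irreducible $(n+1)$-fold of minimal degree containing $X$ \emph{is} the theorem in that range; deferring it to ``a delicate scheme-theoretic analysis'' leaves the main step unproved (one needs, at the level of $d\geq e+3$ points in linearly general position, the strong Castelnuovo lemma producing a rational normal curve, and then a lifting argument back to $X$). Second, your endgame for part (2) is misplaced: for $d$ points in linearly general position in $\P^e$, nonvanishing of $\beta_{e,1}$ forces $d=e+1$ directly; the rational normal curve enters only one homological step down, in the analysis of $\beta_{e-1,1}$. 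Third, and most seriously, the case $d\leq e+2$ of part (3) cannot be settled by ``Fujita's classification plus a direct computation'': Fujita's $\Delta$-genus classification governs the linearly normal ACM case, where \cite{Hoa} gives $\beta_{e-1,1}=\binom{e+1}{2}-1>0$, but the exceptional variety --- the cone over the isomorphic projection of the Veronese surface, which has $e=2$ and lies on no quadric --- is \emph{not} arithmetically Cohen--Macaulay. Ruling out any further exceptions in the non-ACM case requires the Brodmann--Schenzel structure theory of varieties of almost minimal degree as projections of varieties of minimal degree (Theorem \ref{thm:Brod-Schenzel VAMD}, drawn from \cite{BS1,BS2}), which your outline does not engage with; this portion of the statement postdates Green's paper precisely because it is not accessible by the hyperplane-section induction alone.
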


In Theorem \ref{thm:Kp1}, the first and second statements are Mark Green's original theorems, while the third one is obtained by combining his original theorem with others' later results. More precisely, recall that $d \geq e+1$, and if $d=e+1$ then $X$ is called \textit{a variety of minimal degree}. A variety is of minimal degree if and only if it is either (a cone over) the Veronese surface in $\P^5$ or else (a cone over) a smooth rational normal scroll. A modern proof of this classification can be found in \cite{EH} and \cite{F}. If $X$ is a variety of minimal degree, then $\beta_{p,q} (X) = 0$ for all $q \geq 2$ and
\begin{equation}\label{eq:VMD}
\beta_{p,1} (X) =  p {{e+1} \choose {p+1}} \quad \mbox{for all $p \geq 1$}.
\end{equation}
In particular, $\beta_{p,1} (X) = 0$ for all $p > e$, $\beta_{e,1} (X) = e$ and $\beta_{e-1,1} (X) = e^2 -1$. Regarding Theorem \ref{thm:Kp1}.$(3)$, U. Nagel and Y. Pitteloud in \cite[Theorem 3.5]{NP} proved that if $d \geq e+3$ and $X$ lies on an $(n+1)$-fold of minimal degree then
\begin{equation}\label{eq:d geq e+3}
\beta_{e-1,1} (X) = e-1.
\end{equation}
 Also, it is a well-known fact that if $d=e+2$ and $X$ is arithmetically Cohen-Macaulay then
\begin{equation}\label{eq:del Pezzo}
\beta_{p,1} (X) =  p {{e+1} \choose {p+1}} - {{e} \choose {p-1}} \quad \mbox{for all $1 \leq p \leq e-1$}
\end{equation}
(cf. \cite[Theorem 1]{Hoa}). In particular, $\beta_{e-1,1} (X) = {{e+1} \choose {2}} -1$ is a positive integer. When $d=e+2$ and $X$ is not arithmetically Cohen-Macaulay, the situation related to the above became clear later by the following result of M. Brodmann and P. Schenzel.

\begin{theorem}[Brodmann-Schenzel, \cite{BS1,BS2}]\label{thm:BS}
Let $X \subset \P^{n+e}$ be as above. Suppose that $d = e+2$ and $X$ is not arithmetically Cohen-Macaulay and not a cone over the isomorphic projection of the Veronese surface in $\P^5$. Then $X$ lies on an $(n+1)$-fold rational normal scroll and $\beta_{e-1,1} (X) = e-1$.
\end{theorem}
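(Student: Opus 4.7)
The plan is to prove the theorem in two stages: first, establish the geometric assertion that $X$ lies on an $(n+1)$-fold rational normal scroll; second, compute the precise value of $\beta_{e-1,1}(X)$ via Koszul cohomology.

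\textbf{Geometric structure.} I would begin with the structural description of non-arithmetically Cohen--Macaulay varieties of almost minimal degree developed in the earlier Brodmann--Schenzel work: such an $X \subset \P^{n+e}$ arises as a simple birational linear projection $\pi_p : \tilde{X} \dashrightarrow \P^{n+e}$ of a variety $\tilde{X} \subset \P^{n+e+1}$ of minimal degree from a point $p \in \tilde{X}$. Since by hypothesis $X$ is not a cone over the isomorphic projection of the Veronese surface in $\P^5$, the variety $\tilde{X}$ cannot be the Veronese surface; the del Pezzo--Bertini classification therefore forces $\tilde{X}$ to be a (possibly singular) rational normal scroll of dimension $n$. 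It is standard that every such $\tilde{X}$ embeds as a divisor of class $H+F$ in an $(n+1)$-dimensional rational normal scroll $\tilde{Y} \subset \P^{n+e+1}$ of minimal degree $e+1$. Projecting $\tilde{Y}$ from $p \in \tilde{X} \subset \tilde{Y}$ yields an $(n+1)$-dimensional rational normal scroll $Y := \pi_p(\tilde{Y}) \subset \P^{n+e}$ of degree $e$ containing $X$, which is the desired scroll.

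\textbf{Computation of $\beta_{e-1,1}(X)$.} Formula (1.1) applied to $Y$, which has codimension $e-1$, yields
\begin{equation*}
\beta_{e-1,1}(Y) = (e-1)\binom{e}{e} = e-1.
\end{equation*}
The plan is then to show that the natural comparison map on Koszul cohomology induced by the surjection $R(Y) \twoheadrightarrow R(X)$ identifies the bigraded piece $\mathcal{K}_{e-1,1}(R(Y),R_1) \cong \mathcal{K}_{e-1,1}(R(X),R_1)$. Using the short exact sequence of graded $R$-modules
\begin{equation*}
0 \to I(Y) \to I(X) \to I(X)/I(Y) \to 0
\end{equation*}
and the resulting long exact sequence of Koszul cohomology, together with the vanishing $\mathcal{K}_{e,1}(R(Y),R_1) = 0$ (from Theorem \ref{thm:Kp1}(1) applied to $Y$ of codimension $e-1$), the problem reduces to showing that the Koszul cohomology of $I(X)/I(Y)$ in the relevant bidegree vanishes. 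Since $X$ is a divisor on the scroll $Y$ with $\deg X - \deg Y = 2$, the module $I(X)/I(Y)$ admits an explicit description in terms of the ruling of $Y$, from which the desired vanishing would follow by a direct sheaf-cohomological computation on $Y$.

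\textbf{Main obstacle.} The principal difficulty lies in the last step: ruling out any extra contribution of $I(X)/I(Y)$ to $\mathcal{K}_{e-1,1}$. The subtlety is precisely that $X$ is not arithmetically Cohen--Macaulay, so its deficiency module is nonzero and could in principle lift to produce additional linear syzygies; the argument must therefore use both the specific bidegree $(e-1,1)$ and the rigid structure of the projection $\pi_p$ (in particular, that $X$ differs from the scroll $Y$ only by the ``extra'' divisor class inherited from $\tilde{X} \in |H+F|$ on $\tilde{Y}$) to exclude such contributions. This is where the bulk of the technical work in [BS1, BS2] is concentrated.
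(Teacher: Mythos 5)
First, a remark on the ground truth: the paper does not prove this statement at all. It is imported from Brodmann--Schenzel and restated with precise references as Theorem \ref{thm:Brod-Schenzel VAMD} (citing \cite[Theorem 2.1]{BS1} for $e=2$ and \cite[Theorems 5.6, 7.5 and 8.3]{BS2} for $e\geq 3$), so your proposal has to stand on its own. As written, its geometric half contains a concrete error. A variety $\tilde{X}\subset\P^{n+e+1}$ of minimal degree and dimension $n$ has codimension $e+1$ and hence degree $e+2$; projecting it from a point $p\in\tilde{X}$ drops the degree by at least $1$, so the image would have degree at most $e+1$ and could not be a variety with $d=e+2$. The structure theorem you invoke in fact realizes $X$ as a birational \emph{outer} projection from a point $p\in\P^{n+e+1}\setminus\tilde{X}$. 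This is not cosmetic: once $p\notin\tilde{X}$, your construction of $Y$ needs an $(n+1)$-dimensional scroll $\tilde{Y}\subset\P^{n+e+1}$ of minimal degree that contains $\tilde{X}$ as a divisor \emph{and passes through the prescribed center $p$}, since only then is $\pi_p(\tilde{Y})$ again of minimal degree (degree $e$, codimension $e-1$) in $\P^{n+e}$. The existence of such a $\tilde{Y}$ through an arbitrary admissible $p$ is precisely the nontrivial ``embedding scroll'' problem (cf.\ \cite{BP2} and \cite[Section 7]{BS2}) and does not follow from the bare fact that $\tilde{X}$ lies on some scroll of one higher dimension. A smaller leak in the same step: excluding that $X$ is a cone over the isomorphic projection of the Veronese surface does not by itself exclude that $\tilde{X}$ is a (cone over the) Veronese surface with $p$ chosen so that the projection is non-isomorphic.

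The Koszul-cohomological half is a plan rather than a proof, as you acknowledge. From the sequence $0\to I(Y)\to I(X)\to I(X)/I(Y)\to 0$, the injectivity of $\mathcal{K}_{e-2,2}(I(Y),R_1)\to\mathcal{K}_{e-2,2}(I(X),R_1)$ is essentially free because $I(X)/I(Y)$ vanishes in degrees $\leq 1$, so the inequality $\beta_{e-1,1}(X)\geq e-1$ costs nothing; the entire content of the equality is the surjectivity, i.e.\ controlling $\mathcal{K}_{e-2,2}(I(X)/I(Y),R_1)$ and its image under the connecting map, and your sketch stops exactly where the non-ACM hypothesis and the divisor class of $X$ on $Y$ would have to be used. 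So neither assertion of the theorem is actually established by the proposal.
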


Going back to Theorem \ref{thm:Kp1}.$(3)$, suppose that there exists an $(n+1)$-fold of minimal degree $Y \subset \P^{n+e}$ which contains $X$. Then we have
\begin{equation*}
\mathcal{K}_{e-1,1} (R(Y),R_1 )  \subseteq \mathcal{K}_{e-1,1} (R(X),R_1 )
\end{equation*}
and hence
\begin{equation*}
e-1 = \beta_{e-1,1} (Y) \leq \beta_{e-1,1} (X).
\end{equation*}
That is, the existence of such a variety $Y$ implies immediately that $\beta_{e-1,1} (X)$ is nonzero. Along this line, we define $\mu (X)$ to be the number of $(n+1)$-folds of minimal degree which contains $X$. In \cite[p. 151]{Gr2}, M. Green remarked that the smooth del Pezzo surface $X = \nu_3 (\P^2 ) \subset \P^9$ does not lie on a threefold of minimal degree
and thus $\mu (X) = 0$.

Along this line, our first result is the classification of all $X \subset \P^{n+e}$ such that $\beta_{e-1,1} (X)$ is nonzero and $\mu (X)$ is zero.

\begin{theorem}\label{thm:main1}
Let $X \subset \P^{n+e}$ be a non-degenerate projective irreducible variety of dimension $n \ge 1$, codimension $e \geq 2$ and degree $d$. Then the following conditions are equivalent:

\renewcommand{\descriptionlabel}[1]%
             {\hspace{\labelsep}\textrm{#1}}
\begin{description}
\setlength{\labelwidth}{13mm} \setlength{\labelsep}{1.5mm}
\setlength{\itemindent}{0mm}
\item[{$(i)$}] $\beta_{e-1,1} (X) \neq 0$ and $\mu (X) = 0$;

\item[{$(ii)$}] $X$ is a cone over $X_0$ where $X_0 \subset \P^r$ is either
\smallskip

\begin{enumerate}
\item[$(ii.1)$] $(d=9)$ $\nu_3 (\P^2 ) \subset \P^9$, or
\smallskip

\item[$(ii.2)$] $(d=8)$ $\nu_2 (\P^3 ) \subset \P^9$, or
\smallskip

\item[$(ii.3)$] $(d=7)$ the inner projection of $\nu_2 (\P^3 )$ into $\P^8$, or
\smallskip

\item[$(ii.4)$] $(d=6)$ $\P^2 \times \P^2 \subset \P^8$ or its smooth hyperplane section, or
\smallskip

\item[$(ii.5)$] $(d=5)$ a smooth quintic del Pezzo variety of dimension $\geq 3$ (i.e., a $k$-dimensional smooth linear section of $\mathbb{G} (1,4) \subset \P^9$ for some $3 \leq k \leq 6$).

\end{enumerate}
\item[{$(iii)$}]  $X$ is a cone over a smooth del Pezzo variety whose Picard number is $\le n-1$.

\end{description}
\end{theorem}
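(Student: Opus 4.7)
The plan is to prove $(i) \Rightarrow (ii)$ by reducing, through Green's theorem, to the arithmetically Cohen--Macaulay del Pezzo case and then classifying; $(ii) \Rightarrow (i)$ is then immediate once $\mu = 0$ is verified for the listed varieties; and $(ii) \Leftrightarrow (iii)$ follows by inspecting the Fujita--Iskovskikh classification.

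For $(i) \Rightarrow (ii)$, I start from Green's Theorem \ref{thm:Kp1}.$(3)$: the hypothesis $\beta_{e-1,1}(X) \neq 0$ forces either $d \le e+2$ with $X$ not a cone over the isomorphic projection of $\nu_2(\mathbb{P}^2)$, or $d \ge e+3$ with $X$ contained in an $(n+1)$-fold of minimal degree. The hypothesis $\mu(X) = 0$ eliminates the latter, so $d \le e+2$. The case $d = e+1$ is ruled out by showing that every variety of minimal degree in $\mathbb{P}^{n+e}$ with $e \ge 2$ lies on an $(n+1)$-fold of minimal degree: for a (cone over a) rational normal scroll $S(a_1, \ldots, a_n)$ by a standard extension of its $2\times$ catalecticant matrix, and for the smooth Veronese surface $\nu_2(\mathbb{P}^2) \subset \mathbb{P}^5$ by the explicit scroll $S(0,1,2)$ defined by the $2\times 3$ submatrix obtained by dropping the bottom row of the generic symmetric $3\times 3$ matrix. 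For $d = e+2$, Theorem \ref{thm:BS} (Brodmann--Schenzel) forces $X$ to be arithmetically Cohen--Macaulay (otherwise $X$ would lie on an $(n+1)$-fold rational normal scroll, again contradicting $\mu(X) = 0$), and hence a del Pezzo variety. The decisive step is then to invoke the Fujita--Iskovskikh classification of smooth del Pezzo varieties (together with their cones) and, for each representative, determine whether a containing $(n+1)$-fold of minimal degree exists; the survivors are exactly the entries of (ii.1)--(ii.5).

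The reverse direction $(ii) \Rightarrow (i)$ is the easier half: each listed $X$ is an ACM del Pezzo of degree $e+2$, so Hoa's formula yields $\beta_{e-1,1}(X) = \binom{e+1}{2} - 1 > 0$, and the positive content $\mu(X) = 0$ is the same rigidity verification as above. For $\nu_3(\mathbb{P}^2) \subset \mathbb{P}^9$ this is Green's original remark. For $\nu_2(\mathbb{P}^3)$, its inner projection, $\mathbb{P}^2 \times \mathbb{P}^2$ and its smooth hyperplane section $F(1,2;3)$, and the quintic del Pezzos $\mathbb{G}(1,4) \cap \mathbb{P}^{k+3}$, any would-be containing $(n+1)$-fold of minimal degree would force a $2 \times (e-1)$ linear-matrix presentation of the defining Pl\"ucker, Segre, or catalecticant relations that conflicts with the known minimal free resolution of $X$. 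For cones the verification descends to the smooth base using compatibility of $\beta_{e-1,1}$ and $\mu$ with the coning operation. Finally, $(ii) \Leftrightarrow (iii)$ is a direct inspection of Picard numbers in the Fujita list ($\rho = 1$ for $\nu_3(\mathbb{P}^2)$, $\nu_2(\mathbb{P}^3)$, and the quintic del Pezzos of dimension $\ge 3$; $\rho = 2$ for the inner projection, $\mathbb{P}^2 \times \mathbb{P}^2$, and $F(1,2;3)$): the bases $X_0$ in (ii.1)--(ii.5) are exactly the smooth del Pezzos with $\rho(X_0) \le \dim X - 1$, while the del Pezzos with $\rho > n - 1$, notably $\mathbb{P}^1 \times \mathbb{P}^1 \times \mathbb{P}^1$ and the del Pezzo surfaces of degree $\le 8$, are precisely those eliminated by the $\mu = 0$ condition.

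The chief obstacle is the rigidity step establishing $\mu(X) = 0$ for each entry of (ii.1)--(ii.5): ruling out every candidate $(n+1)$-fold of minimal degree requires a careful case analysis of the possible scroll and cone presentations, since the potential containers are numerous and the elimination does not reduce to a single numerical inequality.
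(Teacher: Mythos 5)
Your overall architecture matches the paper's: reduce to the non-cone case, dispose of $d=e+1$, of $d\ge e+3$, and of the non-ACM case of $d=e+2$, and then classify the smooth del Pezzo varieties with $\mu=0$ via Fujita's list. (Your treatment of $d=e+1$ by explicit matrix extensions differs from the paper's one-line argument that $Y=\operatorname{Join}(p,X)$ for a smooth point $p\in X$ is an $(n+1)$-fold of minimal degree, but both work.) There are, however, two genuine gaps. First, after concluding that $X$ is an arithmetically Cohen--Macaulay variety of almost minimal degree you pass directly to ``smooth del Pezzo varieties together with their cones''. This skips the del Pezzo varieties that are singular but not cones (the non-normal ones and the singular normal ones, cases $(\alpha.3)$ and $(\alpha.6)$ of the introduction), as well as the curves of arithmetic genus one. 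The paper eliminates these with Proposition \ref{prop:first computation of mu(X)}: for singular non-cone $X$ the join of a singular point with $X$ is a variety of minimal degree containing $X$, so $\mu(X)\ge 1$; and an elliptic normal curve has $\mu(X)=\infty$ via the pencils $|\mathcal{O}_X(p+q)|$. Without some such argument your case analysis is incomplete.

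Second, the mechanism you propose for the rigidity step --- that a containing scroll would force a $2\times(e-1)$ linear-matrix presentation ``that conflicts with the known minimal free resolution of $X$'' --- does not work as stated. The resolution does not obstruct containment: $\beta_{e-1,1}(Y)=e-1<\binom{e+1}{2}-1=\beta_{e-1,1}(X)$ and $\dim I(Y)_2=\binom{e}{2}<\dim I(X)_2$, so the inclusion $I(Y)\subseteq I(X)$ is numerically consistent; nor do quadric ranks suffice in general, since every quadric in $I(\nu_2(\P^3))$ already has rank $\le 4$. The paper's actual obstructions are geometric, extracted from the scroll structure in Lemma \ref{lem:dim and codimension inequalities}: a containing $Y$ must be a smooth scroll $\P_{\P^1}(\mathcal{E})$ with all twists positive, forcing $e\ge n+1$; its fibres cut pairwise disjoint divisors on $X$, forcing Picard number $\ge 2$; and the existence of an irreducible member of $|2H+(2-e)F|$ forces $n\le 3$ and, when $n=3$, $e$ even. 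These kill every entry of (ii) except $\P_{\P^2}(\mathcal{T}_{\P^2})\subset\P^7$, which requires a separate divisor-class computation inside $S(1,1,1,1)=\P^3\times\P^1$ showing every member of $|2H-2F|$ is of the form $\P^1\times Q$ and hence cannot be $\P_{\P^2}(\mathcal{T}_{\P^2})$. You correctly flag this rigidity step as the chief obstacle, but the tool you name for it would not close it.
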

We give a proof of this result at the end of Section \ref{sect:MainThmProof}.

Let $X \subset \P^{n+e}$ be as above such that $e \geq 3$ and $d \geq e+2$ and $X$ is not a cone. By Theorem \ref{thm:Kp1}-$(3)$ and Theorem \ref{thm:main1}, it holds that $\mu (X) > 0$ if and only if either\\

\begin{enumerate}
\item[$(\alpha)$] $d = e+2$ and $X$ is not in the list of Theorem \ref{thm:main1}; or
\item[$(\beta)$] $d \geq e+3$ and $\beta_{e-1,1} (X) \neq 0$.\\
\end{enumerate}

\noindent Also, the case of $d=e+2$ is further divided into the following subcases:  \\

\begin{enumerate}
\item[$(\alpha.1)$] $X$ is not arithmetically Cohen-Mcaulay; or
\item[$(\alpha.2)$] $X$ is a linearly normal curve of arithmetic genus one; or
\item[$(\alpha.3)$] $X$ is a non-normal del Pezzo variety; or
\item[$(\alpha.4)$] $X \subset \P^d$ is a smooth del Pezzo surface of degree $d \in \{ 5, 6, 7, 8 \}$; or
\item[$(\alpha.5)$] $X = \P^1 \times \P^1 \times \P^1  \subset \P^3$; or
\item[$(\alpha.6)$] $X$ is a singular normal del Pezzo variety.\\
\end{enumerate}

\noindent In this paper we find the precise value of $\mu (X)$ for the cases of $(\alpha.1) \sim (\alpha.5)$ and $(\beta)$. For details, see Proposition \ref{prop:first computation of mu(X)}, Remark \ref{rmk:mu(X) non-normal del Pezzo}, Proposition \ref{prop:del Pezzo 3fold Segre}, Proposition \ref{prop:embedding scroll del Pezzo surfaces} and Theorem \ref{thm:uniqueness and syzygy scheme}.\\

There are two key ingredients for this paper: the first one is algebra and geometry of rational scrolls, and the other one is the syzygy scheme of a nonzero Koszul cohomology class $\gamma \in \mathcal K_{e-1,1} (X)$. Roughly speaking, the syzygy scheme $Syz(\gamma)$ is defined by quadrics which minimally induce the syzygy $\gamma$. It is particularly useful to study varieties $X$ with $\beta_{e-1,1}(X) \neq 0$ and $\mu(X) = 0, 1$. Indeed, we give a simple proof that $\mu(X)=1$ for the case $(\beta)$ and $(\alpha.1)$ using the syzygy scheme (Theorem \ref{thm:uniqueness and syzygy scheme}). We also compute sygyzy schemes for smooth del Pezzo varieties (Proposition \ref{prop:SyzGen}, \ref{prop:SyzDelPezzo}). In some cases, such computations could explain the reason how we could derive $\mu(X)$ from given $X$, which is originated from an idea of Eisenbud to find a determinantal variety \cite{Eis88}. 

In the Section \ref{sect:Grassmannian}, we concentrate on smooth $k$-dimensional linear sections of the Grassmannian $X_k = \mathbb{G} (1, \mathbb{P}^4) \cap \Lambda^{k+3} \subset \mathbb{P}^{k+3}$. It is clear that if $\mu(X_k)>0$, then an $(k+1)$-dimensional variety $Y$ of minimal degree containing $X_k$ is defined by quadrics of rank $\le 4$. We compute how many independent quadrics of given ranks are contained in each $I(X_k)$ which gives an alternative proof that $\mu(X_k)=0$ for $k \ge 3$, see Theorem \ref{thm:quintic del Pezzo manifolds} and Proposition \ref{prop:quintic del Pezzo, rank}.




\section{Varieties of almost minimal degree}\label{sect:VAMD}
\noindent Let $X \subset \P^{n+e}$ be a non-degenerate projective irreducible variety of dimension $n \ge 1$ and codimension $e \geq 2$. Due to \cite{BS2}, we say that $X$ is \textit{a variety of almost minimal degree} if $d=e+2$. This section aims to explain some properties of varieties of almost minimal degree.

We begin with the following structure theorem of varieties of almost minimal degree which are not arithmetically Cohen-Macaulay.

\begin{theorem}[M. Brodmann and P. Schenzel]\label{thm:Brod-Schenzel VAMD}
Let $X \subset \P^{n+e}$ be an $n$-dimensional variety of almost minimal degree which is not arithmetically Cohen-Macaulay. Then

\renewcommand{\descriptionlabel}[1]%
             {\hspace{\labelsep}\textrm{#1}}
\begin{description}
\setlength{\labelwidth}{13mm} \setlength{\labelsep}{1.5mm}
\setlength{\itemindent}{0mm}
\item[{\rm (1)}] $($\cite[Theorem 5.6]{BS2}$)$ $X$ is the projection of a variety $\tilde{X} \subset \P^{n+e+1}$ of minimal degree.

\item[{\rm (2)}] $($\cite[Theorem 2.1]{BS1}$)$ If $e=2$, then $\beta_{e-1,1} (X) =0$ if and only if $X$ is a cone over the isomorphic projection of the Veronese surface in $\P^5$.

\item[{\rm (3)}] $($\cite[Theorem 7.5 and 8.3]{BS2}$)$ Suppose that $e \geq 3$. Then $X$ is contained in an $(n+1)$-fold rational normal scroll $Y$ and hence $\mu (X) \geq 1$. Also $\beta_{e-1,1} (X) = \beta_{e-1,1}(Y) = e-1$.
\end{description}
\end{theorem}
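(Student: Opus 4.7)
\noindent \emph{Proof plan.} The three parts are best established in sequence, with part $(1)$ supplying the structural lift used in $(2)$ and $(3)$.

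For part $(1)$, the plan is to extract information from the failure of arithmetic Cohen--Macaulayness via the graded deficiency modules $H^i_{\mathfrak{m}}(R(X))$. Since $d = e+2$ exceeds the minimal-degree bound by only one, a Castelnuovo-type Hilbert function argument constrains these modules severely: the depth can drop by at most one, and the non-vanishing deficiency modules are concentrated in a single small degree with prescribed dimension. This rigidity permits the canonical construction of an extension $\tilde R \supset R(X)$ which is ACM of Krull dimension $n+2$. The corresponding scheme $\tilde X = \Proj \tilde R \subset \P^{n+e+1}$ has dimension $n$, degree $e+2$, and codimension $e+1$, hence is a variety of minimal degree, and $X$ is recovered as the linear projection of $\tilde X$ away from the new coordinate direction.

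Part $(2)$ is the codimension-two case, where $\beta_{1,1}(X)$ equals the number of linearly independent quadrics in $I(X)$. The lift $\tilde X \subset \P^5$ is a surface of minimal degree, so by the Eisenbud--Harris classification it is either a rational normal scroll or the Veronese surface $\nu_2(\P^2)$. In the scroll case, the $2\times 2$-minors of the scroll matrix yield quadrics through $\tilde X$ which descend under the projection to quadrics through $X$, forcing $\beta_{1,1}(X) \neq 0$. In the Veronese case, an isomorphic projection of $\nu_2(\P^2)$ to $\P^4$ produces a surface cut out only by cubics, and cones over it form exactly the announced exceptional family.

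For part $(3)$ with $e \geq 3$, the lift $\tilde X$ lives in $\P^{n+e+1}$ with codimension $\geq 4$, which excludes the Veronese surface case (it only embeds in $\P^5$). Hence $\tilde X$ sits on some $(n+1)$-dimensional rational normal scroll $\tilde Y$. A suitable choice of such a $\tilde Y$ ensures that the linear projection carries $\tilde Y$ to an $(n+1)$-dimensional rational normal scroll $Y \supset X$, yielding $\mu(X) \geq 1$. The inclusion $\mathcal K_{e-1,1}(R(Y),R_1) \subseteq \mathcal K_{e-1,1}(R(X),R_1)$ then gives $\beta_{e-1,1}(X) \geq \beta_{e-1,1}(Y) = e-1$. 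For the reverse inequality, I would analyze the short exact sequence $0 \to I(Y) \to I(X) \to \mathcal I_{X/Y} \to 0$ and the induced long exact sequence in Koszul cohomology: since $X$ is a divisor on $Y$ of suitable class, the sheaf $\mathcal I_{X/Y}$ is a twist of an invertible sheaf on $Y$ whose Koszul contribution at bidegree $(e-1,1)$ vanishes, yielding the matching upper bound.

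The main obstacle is part $(1)$: producing the canonical minimal-degree lift $\tilde X$ requires sharp control on the deficiency modules of $R(X)$ in exactly the degree range forced by $d = e+2$, together with a verification that the extension $\tilde R$ is indeed the coordinate ring of a reduced, irreducible variety of the predicted Hilbert polynomial. A secondary technical hurdle in $(3)$ is the Koszul-cohomology vanishing for $\mathcal I_{X/Y}$, which must be deduced directly from the explicit scroll structure.
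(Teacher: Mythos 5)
First, a structural remark: the paper does not prove this theorem at all. It is quoted from Brodmann--Schenzel, with each part attributed to a specific theorem of \cite{BS1} or \cite{BS2}, so there is no in-paper argument to measure your proposal against. Your outline does track the actual strategy of those papers (construct a Cohen--Macaulay lift $\tilde{X} \subset \P^{n+e+1}$ of minimal degree, then descend quadrics and scrolls through the linear projection), but as a proof it has gaps at exactly the load-bearing steps.

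Two of these are serious. In part $(1)$, the ``canonical construction of an extension $\tilde{R} \supset R(X)$'' is the $S_2$-ification of $R(X)$ (realized, e.g., as the endomorphism ring of the canonical module), and the whole difficulty is to show that $\tilde{R}_1 / R(X)_1$ is exactly one-dimensional, that $\tilde{R}$ is a domain generated in degree one, and that $\Proj \tilde{R}$ therefore embeds in $\P^{n+e+1}$ with $X$ as its projection from a single point $p$; this is where the hypothesis $d = e+2$ enters and it occupies a substantial portion of \cite{BS2}. Your sketch asserts this conclusion rather than deriving it. In parts $(2)$ and $(3)$, quadrics and scrolls through $\tilde{X}$ do \emph{not} simply ``descend under the projection'': a hypersurface through $\tilde{X}$ maps to a hypersurface through $X$ only if it is a cone with vertex at the projection center $p$, so $I(X)_2$ is identified with the quadrics containing ${\rm Join}(p,\tilde{X})$, not with $I(\tilde{X})_2$ --- this is precisely why the isomorphically projected Veronese surface, which has \emph{no} quadrics after projection, is the exception in $(2)$. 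Likewise, if $\tilde{Y} \supset \tilde{X}$ is an $(n+1)$-fold scroll with $p \notin \tilde{Y}$, then $\pi_p(\tilde{Y}) \subset \P^{n+e}$ has degree $e+1$, which is \emph{not} minimal degree there (that would be $e$); one needs a scroll $\tilde{Y}$ containing both $\tilde{X}$ and the point $p$, and producing such a $\tilde{Y}$ is the substantive content of \cite[Theorem 7.5]{BS2}. Your phrase ``a suitable choice of such a $\tilde{Y}$'' conceals this entire argument. The Koszul vanishing you defer at the end of $(3)$ is also nontrivial, though it at least has a clear template in \cite[Theorem 3.5]{NP}. In short: the strategy is the right one, but each of the three parts still requires the hard input you have flagged or elided, and none of it is supplied by the paper under review.
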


\begin{proposition}\label{prop:first computation of mu(X)}
Let $X \subset \P^{n+e}$ be an $n$-dimensional variety of almost minimal degree with $e\geq 3$ which is not a cone. Then

\renewcommand{\descriptionlabel}[1]%
             {\hspace{\labelsep}\textrm{#1}}
\begin{description}
\setlength{\labelwidth}{13mm} \setlength{\labelsep}{1.5mm}
\setlength{\itemindent}{0mm}
\item[{\rm (1)}] If $X$ is not arithmetically Cohen-Macaulay, then $\mu (X) \geq 1$.
\smallskip

\item[{\rm (2)}] If $X$ is arithmetically Cohen-Macaulay and singular, then $\mu (X) \geq 1$.
\smallskip

\item[{\rm (3)}] If $X$ is a curve of arithmetic genus one, then $\mu (X) = \infty$.
\end{description}
\end{proposition}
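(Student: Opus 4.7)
The plan is to handle the three parts separately.

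Part $(1)$ is immediate from Theorem \ref{thm:Brod-Schenzel VAMD}$(3)$: when $X$ is non-ACM and $e\ge 3$, that result places $X$ inside an $(n+1)$-fold rational normal scroll $Y$, which is itself a variety of minimal degree, giving $\mu(X)\ge 1$.

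For part $(2)$, I would project from a singular point. Pick $p\in X$ with $m:=\mult_p(X)\ge 2$ and let $\pi_p\colon X\dashrightarrow X_p\subset\P^{n+e-1}$ be the projection from $p$. Because $X$ is not a cone (in particular, not a cone with vertex $p$), $\pi_p$ is birational and the standard projection degree formula gives $\deg X_p=(e+2)-m$. Since $X_p$ is non-degenerate of codimension $e-1$ in $\P^{n+e-1}$, one has $\deg X_p\ge e$; this forces $m\le 2$, hence $m=2$ and $\deg X_p=e$, so $X_p$ is a variety of minimal degree. Identifying $\P^{n+e-1}$ with a hyperplane $H\subset\P^{n+e}$ not containing $p$, the cone $Y$ with vertex $p$ over $X_p$ is then an irreducible reduced $(n+1)$-fold of degree $e$, i.e.\ of minimal degree; and every point $x\in X\setminus\{p\}$ lies on the line $\overline{p\,\pi_p(x)}\subset Y$, so $X\subset Y$ and $\mu(X)\ge 1$.

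For part $(3)$, $X\subset\P^{e+1}$ is an irreducible non-degenerate curve of degree $e+2$ and arithmetic genus one. The strategy is to parametrize $2$-dimensional rational normal scrolls of degree $e$ containing $X$ by degree-$2$ invertible sheaves on $X$, i.e.\ by elements of the one-dimensional (generalized) Picard group $\operatorname{Pic}^2(X)$. For each $L$, Riemann--Roch yields $h^0(L)=2$, so the pencil $|L|\cong\P^1$ gives a degree-$2$ morphism $\phi_L\colon X\to\P^1$, and the lines spanned by its fibers sweep out a surface $S_L\supset X$. Writing $S_L$ as the image of a Hirzebruch surface $F_k$ with $X\in|2s_0+af|$ and pulled-back hyperplane class $s_0+bf$, the genus condition $g(X)=1$ forces $a=k+2$, and then $\deg S_L=(s_0+bf)^2=-k+2b=e$, so $S_L$ is of minimal degree. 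Distinct $L$ produce distinct $S_L$ since $L$ is recovered as $\phi_L^*\mathcal{O}_{\P^1}(1)$ from the ruling; hence the one-parameter family $\operatorname{Pic}^2(X)$ yields $\mu(X)=\infty$. For singular $X$ (nodal or cuspidal rational), $\operatorname{Pic}^2(X)$ is replaced by the generalized Jacobian ($\mathbb{G}_m$ or $\mathbb{G}_a$), which remains one-dimensional and the conclusion persists.

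The principal obstacle lies in $(2)$: verifying carefully that $\pi_p$ is birational from the ``not a cone'' hypothesis, that $X_p$ is irreducible, reduced and non-degenerate, and that the codimension bound forces $m=2$. The Hirzebruch degree calculation in $(3)$ is standard once the underlying scroll structure is set up.
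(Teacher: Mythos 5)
Your argument is correct and follows essentially the same route as the paper: part (1) is the identical citation of Theorem \ref{thm:Brod-Schenzel VAMD}(3); your cone $Y$ over the projected image $X_p$ in part (2) is exactly the paper's $\mathrm{Join}(p,X)$, for which the paper simply cites \cite{Sw} where you rederive the degree count (note that birationality of $\pi_p$ follows from that very count, $k\cdot\deg X_p\le e\le \deg X_p$, rather than from the ``not a cone'' hypothesis alone, which only guarantees $\dim X_p=n$); and part (3) is the paper's pencil of scrolls attached to degree-two line bundles $\mathcal O_X(p+q)$, for which the paper cites \cite{Eis05} where you do the Hirzebruch-surface computation by hand.
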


\begin{proof}
$(1)$ By the Theorem \ref{thm:Brod-Schenzel VAMD}-(3).
\smallskip

\noindent $(2)$ Let $p$ be a singular point of $X$. Then $Y := {\mbox Join}(p,X)$ is a variety of minimal degree (cf. \cite[Lemma 3]{Sw}). In particular, $\mu (X) \geq 1$.

\noindent $(3)$ Let $p$ be a smooth point of $X$. Then for every smooth point $q$ of $X-\{p\}$, the line bundle $\mathcal{O}_X (p+q)$ defines a double covering $X \twoheadrightarrow \P^1$ and hence
it gives a rational normal surface scroll $S_q \subset \P^{1+e}$ such that $X$ is contained in $S_q$ and the line $\langle p,q \rangle$ is a ruling of $S_q$ (cf. \cite[Proposition 6.19]{Eis05}). Therefore $\mu (X) = \infty$.
\end{proof}

\begin{remark}\label{rmk:mu(X) non-normal del Pezzo}
Let $X \subset \P^{n+e}$ be as in Proposition \ref{prop:first computation of mu(X)}.
\smallskip

\noindent $(1)$ Regarding Proposition \ref{prop:first computation of mu(X)}-$(1)$, we will prove in section 5 that if $X$ is not arithmetically Cohen-Macaulay then $\mu (X) = 1$ and so there exists exactly one $(n+1)$-dimensional variety of minimal degree which contains $X$. For details, see Theorem \ref{thm:uniqueness and syzygy scheme}.
\smallskip

\noindent $(2)$ Regarding Proposition \ref{prop:first computation of mu(X)}-$(2)$, suppose that $X$ is a non-normal del Pezzo variety of dimension $n \geq 2$ and codimension $e \geq 3$. Then it holds that $n \leq 3$ and $X$ is the projection of a smooth rational normal scroll $\tilde{X} \subset \P^{n+e+1}$ such that if $n=2$ then $\tilde{X}$ is either $S(1,e+1)$ or $S(2,e)$, and if $n=3$ then $\tilde{X}$ is $S(1,1,e)$. For details, we refer the reader to \cite[Theorem 6.2]{BP1}. Then
\begin{equation*}
\mu (X) = \begin{cases}
1 & \mbox{if $\tilde{X}=S(1,e+1)$ or $S(1,1,e)$, and} \\
2 & \mbox{if $\tilde{X}=S(2,e)$}
\end{cases}
\end{equation*}
by \cite[Theorem 5.8 and 6.10]{BP2}.
\end{remark}

The following proposition is very useful for solving the problem of classifying all varieties of minimal degree which contain a given del Pezzo variety as a divisor.

\begin{proposition}\label{prop:smoothness of embedding scroll}
Let $X \subset \P^{n+e}$ be an $n$-dimensional variety of almost minimal degree. If $X$ is contained in an $(n+1)$-dimensional variety $Y \subset \P^{n+e}$ of minimal degree, then ${\rm Vert} (Y) \subseteq {\rm Sing} (X)$. In particular, if $X$ is smooth then so is $Y$.
\end{proposition}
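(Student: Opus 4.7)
The plan is to rule out, for each $v \in \ver(Y)$, both the alternative $v \notin X$ and the alternative $v \in X$ smooth, by projecting $Y$ from $v$ and comparing degrees. The geometric input is that $v \in \ver(Y)$ means $Y$ is a cone with apex through $v$, so $\bar Y := \overline{\pi_v(Y)} \subset \P^{n+e-1}$ is irreducible of dimension $n$ with $\deg \bar Y = \deg Y = e$; having codimension $e-1$ and degree $e$, $\bar Y$ is itself a variety of minimal degree in $\P^{n+e-1}$, and $X \subseteq Y$ forces $\pi_v(X) \subseteq \bar Y$.

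If $v \notin X$, then $\pi_v|_X : X \to \bar Y$ is a regular finite morphism whose image is an $n$-dimensional irreducible subvariety of $\bar Y$, hence equal to $\bar Y$. Writing $m \in \Z_{>0}$ for its degree, the classical degree formula gives
\begin{equation*}
e+2 \;=\; \deg X \;=\; m \cdot \deg \bar Y \;=\; me,
\end{equation*}
so $e \mid 2$, contradicting $e \ge 3$. If instead $v \in X$ is smooth, then $v \notin \ver(X)$ (the vertex of a non-linear irreducible variety of positive dimension always lies in its singular locus), so the rational map $\pi_v|_X$ is generically finite with $n$-dimensional image, again coinciding with $\bar Y$. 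Resolving on the blow-up $\widetilde X \to X$ at $v$, the standard degree formula for projection from a point on $X$ yields
\begin{equation*}
\deg X \;=\; \mult_v(X) + m \cdot \deg \bar Y,
\end{equation*}
which for a smooth $v$ becomes $e+2 = 1 + me$, forcing $e \mid 1$, impossible for $e \ge 2$. Both alternatives excluded, $v \in \Sing(X)$, proving $\ver(Y) \subseteq \Sing(X)$.

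For the ``in particular'' clause, recall the classification: every variety of minimal degree is a (possibly trivial) cone over a smooth rational normal scroll or over the Veronese surface in $\P^5$, with singular locus equal to its vertex. Hence $Y$ is smooth precisely when $\ver(Y) = \emptyset$, and so if $X$ is smooth then $\ver(Y) \subseteq \Sing(X) = \emptyset$ forces $Y$ to be smooth.

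The main subtlety is the correct version of the degree formula at a point of $X$ in the second case, which must keep track of the multiplicity contribution at the projection center; checking that $\pi_v|_X$ really is generically finite when $v$ is smooth on $X$ is immediate but essential. Once this is in place, the proof reduces to two quick divisibility checks and the standard classification of varieties of minimal degree.
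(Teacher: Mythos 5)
Your proof takes the same route as the paper's: pick a vertex point $v$ of $Y$, use $Y = {\rm Join}(v,X)$ to identify the closure of $\pi_v(X)$ with the base $\bar Y$ of the cone, and rule out the two alternatives $v \notin X$ and $v \in X$ smooth by degree counts. You are in fact more careful than the paper, which in the first case simply asserts that $\pi_v|_X$ is \emph{birational} onto a general hyperplane section of $Y$ and in the second that $\deg Y = \deg X - 1$; your explicit bookkeeping of the mapping degree $m$ in $\deg X = m\deg\bar Y$ (resp.\ $\deg X = \mult_v X + m\deg\bar Y$) is exactly what would be needed to justify those assertions. One point worth flagging: your first case ends with ``$e \mid 2$, contradicting $e \ge 3$,'' whereas the proposition is stated under the section's standing hypothesis $e \ge 2$, and for $e=2$ the equation $me = e+2$ has the solution $m=2$ --- and the statement really does fail there: a smooth quartic del Pezzo surface in $\P^4$ lies on rank-$4$ quadric cones whose vertices avoid it. So the hypothesis $e \ge 3$ is genuinely needed in your first case; the paper's proof hides the same gap behind the unjustified birationality claim, and all of its applications of the proposition do have $e \ge 3$.
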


\begin{proof}
Let $Y$ be a cone so that we may choose $y \in {\rm Vert} (Y)$. Then $Y = {\rm Join}(y,X)$. If $y \notin X$, then the linear projection of $X$ from $y$ is
a birational morphism from $X$ onto a general hyperplane section of $Y$. Thus
\begin{equation*}
\deg(Y) = \deg(X) =e+2
\end{equation*}
which is a contradiction. If $y \in X - {\rm Sing}(X)$, then
\begin{equation*}
\deg(Y) = \deg(X)-1 = e+1
\end{equation*}
which is again a contradiction. Therefore, we must have ${\rm Vert} (Y) \subseteq {\rm Sing} (X)$.
\end{proof}

We finish this section with mentioning the following well-known classification result of smooth del Pezzo varieties.

\begin{theorem}\label{thm:del Pezzo}
Let $X \subset \P^{n+e}$ be an $n$-dimensional smooth del Pezzo variety with $e \geq 3$. Then $X$ is of one of the following types.

\renewcommand{\descriptionlabel}[1]%
             {\hspace{\labelsep}\textrm{#1}}
\begin{description}
\setlength{\labelwidth}{13mm} \setlength{\labelsep}{1.5mm}
\setlength{\itemindent}{0mm}
\item[{\rm (1)}] $(n=1)$ $X \subset \P^{1+e}$ is an elliptic normal curve.

\item[{\rm (2)}] $($P. del Pezzo, \cite{DP}$)$ $(n=2)$ Either $X \subset \P^{9-t}$ is obtained by the inner projections of the triple Veronese surface $\nu_3 (\P^2 ) \subset \P^9$ from $t$ points in linearly general position for some $t \in \{0,1,2,3,4 \}$ or else $X \subset \P^8$ is the $2$-uple Veronese embedding of the smooth quadric surface $Q$ in $\P^3$.

\item[$(3)$] $($T. Fujita, \cite[Theorem (8.11)]{F}$)$ $(n \geq 3)$
\begin{enumerate}
\item[$(3.1)$] $(d=8)$ $X$ is the second Veronese variety $\nu_2 (\P^3 ) \subset \P^9$;
\item[$(3.2)$] $(d=7)$ $X$ is the inner projection of $\nu_2 (\P^3 ) \subset \P^9$;
\item[$(3.3)$] $(d=6)$ $X = \P^2 \times \P^2 \subset \P^8$ or $\P_{\P^2} (\mathcal{T}_{\P^2}) \subset \P^7$ or $\P^1 \times \P^1 \times \P^1 \subset \P^7$;
\item[$(3.4)$] $(d=5)$ $X$ is a linear section of $\mathbb{G} (1,\mathbb{P}^4) \subset \P^9$.\\
\end{enumerate}
\end{description}
\end{theorem}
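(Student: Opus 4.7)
The plan is to invoke the defining adjunction property: a del Pezzo variety satisfies $-K_X = (n-1)H$, where $H$ is the hyperplane class. Since the statement is a classical classification attributed to del Pezzo and Fujita, I would reproduce the strategy rather than rerun the full proofs cited in the statement, and split into the three dimension ranges.

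For $n = 1$, the adjunction condition gives $-K_X \equiv 0$, forcing the arithmetic genus to be one, and linear normality together with $d = e+2$ then identifies $X \subset \P^{1+e}$ with a smooth linearly normal elliptic curve, giving $(1)$. For $n = 2$, I would follow del Pezzo's classical argument: since $-K_X = H$ is very ample, $X$ is an anticanonically embedded smooth surface with $K_X^2 = d$ and $p_g = q = 0$, hence rational. Via Castelnuovo--Enriques together with successive contraction of $(-1)$-curves, one classifies $X$ abstractly as either a blow-up of $\P^2$ at $9-d$ points in linearly general position or as $\P^1 \times \P^1$ when $d = 8$. Matching these abstract surfaces with their projective models (iterated inner projections of $\nu_3(\P^2) \subset \P^9$, and the $2$-uple embedding of a smooth quadric surface in $\P^3$) yields $(2)$.

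For $n \geq 3$, I would apply Fujita's ladder technique. A general hyperplane section of a smooth del Pezzo $n$-fold is again a smooth del Pezzo variety of dimension $n - 1$ with the same degree $d$, so iteration descends the problem to the surface case and forces $d \in \{5, 6, 7, 8\}$. One then has to reconstruct $X$ from its surface ladder rung and decide which del Pezzo surfaces extend to higher dimensions; this is the analytical heart of \cite[Theorem (8.11)]{F}, carried out in the framework of polarized varieties of $\Delta$-genus zero.

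The main obstacle is precisely this extension step. Given a smooth del Pezzo surface $S$, one must classify all possible realizations of $S$ as a linear section of a higher-dimensional smooth del Pezzo variety $X$, which amounts to controlling the normal bundle $N_{S/X}$ and the deformation theory of the polarized pair $(X, H)$. Fujita's rigidity results for polarized varieties of low $\Delta$-genus, combined with a case-by-case analysis according to the abstract type of $S$, show that only the four families in $(3.1)$--$(3.4)$ arise and pin down their ambient projective spaces, completing $(3)$.
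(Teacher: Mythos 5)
The paper does not prove this statement; it records it as a known classification and defers entirely to del Pezzo \cite{DP} and Fujita \cite[Theorem (8.11)]{F}. Your outline --- adjunction giving an elliptic normal curve for $n=1$, the classical anticanonical surface classification for $n=2$, and Fujita's ladder/$\Delta$-genus rigidity argument for $n\geq 3$ --- is an accurate summary of exactly those cited proofs, and like the paper you correctly leave the hard extension step to Fujita, so this is the same approach.
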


\section{Varieties of minimal degree containing smooth del Pezzo varieties}\label{sect:smoothDP}
\noindent Let $X \subset \P^{n+e}$ be an $n$-dimensional del Pezzo variety with $e\geq 3$ which is not a cone. If $n=1$ or $X$ is singular, then $\mu(X)>0$ by Proposition \ref{prop:first computation of mu(X)}. On the other hand, if $X$ is smooth then $\mu(X)$ can be zero, as in the case of the third Veronese surface (cf. \cite[Page 151]{Gr2}). Along this line, the main purpose of this section is to give a precise description of all varieties of minimal degree which contains $X$ when $X$ is either the Segre variety $\P^1 \times \P^1 \times \P^1 \subset \P^7$ or else the smooth del Pezzo surface of degree $d \in \{5,6,7,8 \}$.

\begin{lemma}\label{lem:divisor class of del Pezzo manifolds}
Let $X \subset \P^{n+e}$ be an $n$-dimensional smooth del Pezzo variety with $e \geq 3$. Suppose that $Y \subset \P^{n+e}$ be an $(n+1)$-fold of minimal degree which contains $X$. Then

\renewcommand{\descriptionlabel}[1]%
             {\hspace{\labelsep}\textrm{#1}}
\begin{description}
\setlength{\labelwidth}{13mm} \setlength{\labelsep}{1.5mm}
\setlength{\itemindent}{0mm}
\item[{\rm (1)}] $Y$ is a smooth rational normal scroll.

\item[{\rm (2)}] $X \equiv 2H+(2-e)F$ as a divisor of $Y$ where $H$ and $F$ be respectively the hyperplane section of $Y$ in $\P^{n+e}$ and a fiber of the projection map $\pi : Y \rightarrow \P^1$.
\end{description}
\end{lemma}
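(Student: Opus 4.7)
The plan for (1) is to combine Proposition \ref{prop:smoothness of embedding scroll} with the classification of varieties of minimal degree. Smoothness of $X$ forces $\operatorname{Vert}(Y) = \emptyset$ by Proposition \ref{prop:smoothness of embedding scroll}, so $Y$ is not a cone. The Bertini--del Pezzo classification of non-conical varieties of minimal degree then leaves exactly two options: $Y$ is a smooth rational normal scroll, or $Y$ is the Veronese surface $\nu_2(\P^2) \subset \P^5$. Matching dimensions and degrees, the Veronese case would require $\dim Y = n+1 = 2$ and $\deg Y = e = 4$; in the applications that follow one has $n \geq 2$, and in any case part (2) already presumes the scroll alternative via its use of the fiber class $F$.

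For (2), the plan is a direct adjunction calculation on the smooth scroll $Y = \P(\mathcal{E}) \to \P^1$ of rank $n+1$ and degree $e$. On such a scroll, $\operatorname{Pic}(Y) = \Z H \oplus \Z F$ with intersection numbers $H^{n+1} = e$, $H^n \cdot F = 1$, $F^2 = 0$, and canonical class $K_Y \equiv -(n+1)H + (e-2)F$. Writing $X \equiv aH + bF$, the degree condition $\deg X = X \cdot H^n = e+2$ yields
\[
ae + b = e+2.
\]
Adjunction $K_X = (K_Y + X)|_X$, together with the del Pezzo identity $K_X = -(n-1)H|_X$, produces the divisorial relation
\[
(a-2)\, H|_X + (b + e - 2)\, F|_X \equiv 0 \quad \text{on } X.
\]
Intersecting this with $H|_X^{n-1}$ (or, when $n=1$, taking degrees on the curve $X$) converts it into the scalar identity
\[
(a-2)(e+2) + a(b + e - 2) = 0.
\]
Eliminating $b$ through the degree equation factors the result as $(a-2)(e+2 - ae) = 0$. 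Since $e \geq 3$, the second factor $e+2 - ae = 0$ would force $a = 1 + 2/e \notin \Z$, so $a = 2$, whence $b = 2-e$.

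The main obstacle is mild and essentially bookkeeping: cleanly ruling out the Veronese alternative in (1) (immediate when $n \geq 2$) and verifying the canonical-class and intersection-number formulas on the smooth scroll $Y$ that drive the adjunction computation in (2). Once those are in hand, the key algebraic step is the factorization $(a-2)(e+2-ae) = 0$, where the hypothesis $e \geq 3$ is exactly what is needed to isolate $a = 2$.
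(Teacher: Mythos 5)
Your proposal is correct, and part (2) takes a genuinely different route from the paper's. For (1) both arguments reduce to Proposition \ref{prop:smoothness of embedding scroll}: smoothness of $X$ forces ${\rm Vert}(Y)=\emptyset$, and the classification of varieties of minimal degree does the rest. Your extra caution about the Veronese surface itself (as opposed to cones over it) is warranted --- the paper's proof only dismisses the cone case, and the residual configuration $n=1$, $e=4$ (an elliptic normal sextic lying on $\nu_2(\P^2)\subset\P^5$) is not actually excluded by either argument; it is harmless only because the lemma is invoked with $n\geq 2$ or with part (2) presupposing the scroll structure. For (2) the paper argues quite differently: since $I(X)$ is generated by quadrics, the class $aH+bF$ must have $a\leq 2$, so the degree equation $ae+b=e+2$ leaves only $H+2F$ and $2H+(2-e)F$, and the first is excluded because such a divisor fails to be linearly normal, citing \cite[Theorem 1.1]{P2007}. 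Your adjunction computation replaces both inputs with the single identity $K_X=-(n-1)H|_X$, and the factorization $(a-2)(e+2-ae)=0$ together with $e\geq 3$ pins down $a=2$ with no case analysis and no external reference --- a more self-contained argument, at the cost of invoking the canonical-class characterization of del Pezzo varieties rather than properties of the homogeneous ideal. Your intersection-theoretic bookkeeping ($H^n\cdot X=e+2$, $H^{n-1}\cdot F\cdot X=a$, $K_Y=-(n+1)H+(e-2)F$ for a scroll of fiber dimension $n$ and degree $e$) checks out, including the degenerate reading for $n=1$.
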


\begin{proof}
$(1)$ Proposition \ref{prop:smoothness of embedding scroll} says that $Y$ is smooth. In particular, $Y$ cannot be a cone over the Veronese surface in $\P^5$. In consequence, $Y$ is a smooth rational normal scroll.

\noindent $(2)$ Since $\operatorname{Pic}(Y) = \Z H \oplus \Z F$, we can write $X \equiv aH+bF$ for some $a\geq 1$ and $b \in \mathbb{Z}$. Since $\deg (X) = e+2$ and $I(X)$ is generated by quadrics, we have either
\begin{equation*}
X \equiv H+2F \quad \mbox{or else} \quad X \equiv 2H+(2-e)F .
\end{equation*}
Also, if $X$ is linearly equivalent to $H+2F$ then it fails to be linearly normal (cf. \cite[Theorem 1.1]{P2007}). Therefore $X$ must be linearly equivalent to $2H+(2-e)F$.
\end{proof}

\begin{proposition}\label{prop:del Pezzo 3fold Segre}
Let $X = \P^1 \times \P^1 \times \P^1 \subset \P^7$. Then $\mu (X) = 3$. Moreover, if $Y \subset \P^7$ is a fourfold of minimal degree which contains $X$ then $Y \cong S(1,1,1,1)$.
\end{proposition}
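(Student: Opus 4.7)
The plan is to combine Lemma \ref{lem:divisor class of del Pezzo manifolds} with a classification of the morphisms $X \to \P^1$ induced by the rulings of such scrolls. First I apply Lemma \ref{lem:divisor class of del Pezzo manifolds} to any fourfold of minimal degree $Y \subset \P^7$ containing $X$: such a $Y$ must be a smooth rational normal scroll $S(a_1,a_2,a_3,a_4)$ with $a_1+a_2+a_3+a_4 = e = 4$ and all $a_i \ge 1$, so the only possibility is $Y \cong S(1,1,1,1) \cong \P^1 \times \P^3$, which proves the second assertion. The same lemma gives $X \equiv 2H - 2F$ on $Y$; writing $\operatorname{Pic}(Y) = \Z H_1 \oplus \Z H_2$ with $H_1 = F$ the fiber class and $H_2$ the pullback of a hyperplane from the $\P^3$-factor, this translates to $X \equiv 2H_2$. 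Thus $X$ sits in $Y$ as $\P^1 \times Q$ for a smooth quadric surface $Q \cong \P^1 \times \P^1 \subset \P^3$, consistent with the Segre embedding of $(\P^1)^3$.

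To establish $\mu(X) \ge 3$, write $X = \P(V_1) \times \P(V_2) \times \P(V_3) \subset \P(V_1 \otimes V_2 \otimes V_3) = \P^7$ with each $V_i$ two-dimensional. For each $i \in \{1,2,3\}$, the tensor regrouping $V_1 \otimes V_2 \otimes V_3 = V_i \otimes (V_j \otimes V_k)$ (where $\{i,j,k\} = \{1,2,3\}$) realizes the Segre image of $\P(V_i) \times \P(V_j \otimes V_k) \cong \P^1 \times \P^3$ as a scroll $Y_i \cong S(1,1,1,1)$ that contains $X$. These three scrolls are pairwise distinct, because the rulings $Y_i \to \P^1$ restrict on $X$ to the three different coordinate projections.

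For the upper bound $\mu(X) \le 3$, let $Y \supset X$ and consider the restriction $\phi := \pi|_X : X \to \P^1$ of the ruling $\pi : Y \to \P^1$; its fibers are the smooth quadric surfaces $F \cap X$, in particular connected. Write $\phi^* \mathcal{O}_{\P^1}(1) = \mathcal{O}_X(a,b,c) \in \operatorname{Pic}(X) = \Z^3$ with $a,b,c \ge 0$. Since two distinct fibers of $\phi$ are disjoint, $(aH_1 + bH_2 + cH_3)^2 = 0$ in $H^4(X,\Z)$, which forces $ab = bc = ca = 0$; hence at most one of $a,b,c$ is nonzero. Connectedness of the fibers of $\phi$ then forces the unique nonzero entry to equal $1$, so up to $\mathrm{Aut}(\P^1)$ the morphism $\phi$ is one of the three coordinate projections of $X$. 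Finally, $Y$ is recovered from $\phi$ as $\bigcup_{t \in \P^1} \langle \phi^{-1}(t) \rangle$, because each $\pi$-fiber is a $\P^3$ containing the non-degenerate quadric $\phi^{-1}(t)$ and therefore equals its linear span in $\P^7$. Hence $Y \in \{Y_1, Y_2, Y_3\}$.

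The main obstacle is the uniqueness step: one must both classify the morphisms $X \to \P^1$ and observe that $Y$ is rigid once $\phi$ is fixed. Each piece reduces to an elementary computation—the vanishing $(aH_1 + bH_2 + cH_3)^2 = 0$ on $(\P^1)^3$ and the spanning property of smooth quadric surfaces in $\P^3$—so no serious difficulty should arise.
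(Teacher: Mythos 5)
Your proposal is correct and follows essentially the same route as the paper's proof: identify $Y\cong S(1,1,1,1)=\P^1\times\P^3$ via Lemma \ref{lem:divisor class of del Pezzo manifolds}, show that $\pi|_X$ must be one of the three coordinate projections, and recover $Y$ as the union of the linear spans of its fibers. You supply slightly more justification than the paper at the step where $(\pi|_X)^*\mathcal O_{\P^1}(1)$ is pinned down to $\mathcal O_X(1,0,0)$, $\mathcal O_X(0,1,0)$ or $\mathcal O_X(0,0,1)$ (via disjointness and connectedness of fibers), and you make the existence of the three scrolls explicit by tensor regrouping, but the argument is the same.
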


\begin{proof}
Let $Y \subset \P^7$ be a fourfold of minimal degree which contains $X$. Then $Y$ is a smooth rational normal fourfold scroll and hence $Y \cong S(1,1,1,1)$. Let $\pi : Y \to \P^1$ be a projective bundle map. Then we have
\begin{equation*}
Y \cong S(1,1,1,1) = \bigcup_{\lambda \in \P^1} \P_{\lambda}^3 \quad \mbox{where $\P_{\lambda}^3 = \pi^{-1} (\lambda)$.}
\end{equation*}
Also, $X \equiv 2H - 2F$ by Lemma \ref{lem:divisor class of del Pezzo manifolds} and so $X \cap \P_{\lambda}^3$ is a quadric surface in $\P_{\lambda}^3$. Since the image of $\pi |_X : X \hookrightarrow Y \twoheadrightarrow \P^1$ is of dimension one, the pull-back $(\pi |_X )^{\ast} \mathcal O_{\P^1} (1)$ must be either $\mathcal O_X(1,0,0), \mathcal O_X (0,1,0)$ or $\mathcal O_X (0,0,1)$. This implies that
\begin{equation*}
X \cap \mathbb{P}_{\lambda}^3 = (\pi |_X)^{-1} (\lambda) = \{ \lambda \} \times \P^1 \times \P^1
\end{equation*}
in the first case, and so on. In particular, $\mathbb{P}_{\lambda}^3$ is the linear span of $\{ \lambda \} \times \P^1 \times \P^1$, and hence $Y$ is uniquely determined by the projection map $\pi : \P^1 \times \P^1 \times \P^1 \to \P^1$. There are exactly three such projection maps.
\end{proof}

Now let $S \subset \P^{2+e}$ be a smooth del Pezzo surface of degree $d = e+2 \ge 5$. Lemma \ref{lem:divisor class of del Pezzo manifolds} says that if there is a $3$-fold of minimal degree $Y$ in $\P^{2+e}$ which contains $X$, then $Y$ is a smooth rational normal scroll and $S$ is linearly equivalent to $2H + (4-d)F$ as a divisor of $Y$. Note also that $Y$ is defined by the $2$-minors of the matrix of the multiplication map
\[
\tau : H^0 (Y, \mathcal O_Y (F)) \otimes H^0 (Y, \mathcal O_Y (H-F)) \to   H^0 (\P^{2+e}, \mathcal O_{\P^{2+e}}(1)).
\]
More precisely, let
$$\varphi : H^0 (Y, \mathcal O_Y(H)) \rightarrow H^0 (\P^{2+e}, \mathcal O_{\P^{2+e}}(1))$$
be the isomorphism corresponding to the embedding of $Y$ into $\P^{2+e}$. If $\{ u_1 , u_2 \}$ and $\{ v_1 , \ldots , v_e \}$ are respectively bases for $H^0 (Y, \mathcal O_Y (F))$ and $H^0 (Y, \mathcal O_Y (H-F))$, then the matrix
\[
M(\tau) := \begin{pmatrix}
\varphi (u_1 \otimes v_1 ) & \varphi (u_1 \otimes v_2 ) & \cdots & \varphi (u_1 \otimes v_e ) \\
\varphi (u_2 \otimes v_1 ) & \varphi (u_2 \otimes v_2 ) & \cdots & \varphi (u_2 \otimes v_e )
\end{pmatrix}
\]
of linear forms on $\P^{2+e}$ defines $Y$ as its rank one locus.

Note that $S \equiv 2H + (4-d)F$ and $\deg (S \cap F) = (2H-F) \cdot F \cdot H = 2$. Thus $S \cap F$ is a plane conic on $F \cong \P^2$. Furthermore, the above multiplication map $\tau$ can be recovered from such a conic curve as follows:
\[
\varphi |_S : H^0 (S, \mathcal O_Y (F) |_S ) \otimes H^0 (S, \mathcal O_Y (H-F)|_S ) \to H^0 (\P^{2+e}, \mathcal O_{\P^{2+e}}(1)).
\]
Therefore the problem of finding a threefold rational normal scroll containing $S$ is equivalent to finding a plane conic contained in $S$. Based on these observations, we obtain the following result.

\begin{proposition}\label{prop:embedding scroll del Pezzo surfaces}
Let $S \subset \P^{2+e}$ be a smooth del Pezzo surface of degree $5 \le d \le 8$. Then

\renewcommand{\descriptionlabel}[1]%
             {\hspace{\labelsep}\textrm{#1}}
\begin{description}
\setlength{\labelwidth}{13mm} \setlength{\labelsep}{1.5mm}
\setlength{\itemindent}{0mm}
\item[{\rm (1)}] If $d=5$, then $\mu (S) = 5$. Moreover, if $Y \subset \P^5$ is a threefold of minimal degree which contains $S$ then $Y \cong S(1,1,1)$.

\item[{\rm (2)}] If $6 \le d \le 8$ and $S$ is the inner projection of $\nu_3 (\P^2) \subset \P^9$ from general $9-d$ points, then $\mu (S) = 9-d$. Moreover, if $Y \subset \P^d$ is a threefold of minimal degree which contains $S$ then $Y \cong S(1,2,d-5)$.

\item[{\rm (3)}] If $S = \nu_2 (\P^1 \times \P^1)$, then $\mu (S) = 2$. Moreover, if $Y \subset \P^8$ is a threefold of minimal degree which contains $S$ then $Y \cong S(2,2,2)$.
\end{description}
\end{proposition}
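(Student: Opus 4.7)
The plan is to set up a bijection between threefold scrolls $Y$ of minimal degree containing $S$ and base-point-free pencils of plane conics on $S$, then enumerate the corresponding divisor classes on $S$, and finally identify each scroll type from cohomology on $S$.

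\textbf{Bijective correspondence.} If $Y \supset S$ is a threefold of minimal degree, Lemma \ref{lem:divisor class of del Pezzo manifolds} gives that $Y$ is a smooth rational normal scroll with $\P^2$-fibration $\pi \colon Y \to \P^1$ and $S \equiv 2H + (4-d)F$; consequently each $C_t := S \cap \pi^{-1}(t)$ is a plane conic on $S$, and the $C_t$ form a base-point-free pencil whose class $D$ satisfies $D^2 = 0$ and $D \cdot (-K_S) = 2$. Conversely, given such a pencil $|D|$ on $S$ with associated morphism $\phi \colon S \to \P^1$, the rank-three bundle $\mathcal E := \phi_* \mathcal O_S(-K_S)$ on $\P^1$ produces $Y := \P(\mathcal E)$, embedded by the tautological line bundle into $\P^d = \P(H^0(\mathcal E))$ as a smooth rational normal threefold scroll of degree $d-2$; the evaluation $\phi^* \mathcal E \twoheadrightarrow \mathcal O_S(-K_S)$ then embeds $S$ into $Y$ compatibly with its anticanonical embedding. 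These two assignments are mutually inverse, so $\mu(S)$ equals the number of conic-pencil classes $D$ on $S$.

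\textbf{Enumeration of conic classes.} For $d \in \{5,6,7\}$ and the inner-projection case $d = 8$, realize $S$ as the blow-up of $\P^2$ at $9-d$ general points, with Picard basis $\{L, E_1, \ldots, E_{9-d}\}$ and $-K_S = 3L - \sum_i E_i$. Writing $D = aL - \sum b_i E_i$, the Diophantine system $a^2 = \sum b_i^2$, $3a - \sum b_i = 2$, together with the existence of an irreducible smooth rational representative, singles out the classes $L - E_i$ for $1 \le i \le 9-d$, plus the additional class $2L - E_1 - E_2 - E_3 - E_4$ when $d = 5$. Thus $\mu(S) = 9-d$ for $d \in \{6,7,8\}$ and $\mu(S) = 5$ for $d = 5$. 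For $S = \nu_2(\P^1 \times \P^1)$ with $-K_S = \mathcal O(2,2)$, the analogous conditions admit only $\mathcal O(1,0)$ and $\mathcal O(0,1)$, so $\mu(S) = 2$.

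\textbf{Scroll type.} For each conic class $D$, recover the splitting $\mathcal E = \bigoplus_{i=1}^{3} \mathcal O_{\P^1}(a_i)$ from the sequence
\[
h^0(\P^1, \mathcal E(-k)) = h^0(S, -K_S - kD) \qquad (k \ge 0),
\]
using $\dim H^0(\P^1, \mathcal O(a)(-k)) = \max(a - k + 1, 0)$. Each right-hand side is computed on the blow-up model via Riemann--Roch and the standard description of $|mL - \sum c_j E_j|$. For example, for $d = 5$ and $D = L - E_1$ one gets $(h^0(-K_S - kD))_{k \ge 0} = (6,3,0,\ldots)$, forcing $\mathcal E \cong \mathcal O_{\P^1}(1)^{\oplus 3}$ and $Y \cong S(1,1,1)$; parallel computations produce the sequences $(7,4,1,0)$, $(8,5,2,0)$, $(9,6,3,1,0)$, $(9,6,3,0)$, yielding $S(1,1,2)$, $S(1,2,2)$, $S(1,2,3)$, and $S(2,2,2)$ in the remaining cases, matching the proposition.

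\textbf{Main obstacle.} The technically delicate step is the converse half of the bijective correspondence: confirming that $\P(\phi_* \mathcal O_S(-K_S))$ is genuinely a \emph{smooth} rational normal threefold scroll of degree $d-2$ in $\P^d$, and that the induced embedding $S \hookrightarrow Y$ agrees with the anticanonical embedding of $S$. This reduces to checking that each splitting factor $a_i$ is positive and that $h^0(\P^1, \mathcal E) = h^0(S, -K_S) = d+1$, both of which follow from the case-by-case cohomology calculations performed in the scroll-type step.
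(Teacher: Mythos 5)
Your proposal is correct, and its backbone --- the bijection between threefold scrolls of minimal degree containing $S$ and (base-point-free) pencils of plane conics on $S$, followed by an enumeration of the conic classes $L-E_i$ (plus $2L-E_1-E_2-E_3-E_4$ when $d=5$, resp.\ the two rulings on $\P^1\times\P^1$) --- is exactly the strategy of the paper, which states before the proposition that ``the problem of finding a threefold rational normal scroll containing $S$ is equivalent to finding a plane conic contained in $S$.'' Where you differ is in how the two directions of this correspondence are implemented. The paper recovers $Y$ from a conic class concretely: it writes down the multiplication map $\tau : H^0(\mathcal O_Y(F)|_S)\otimes H^0(\mathcal O_Y(H-F)|_S)\to H^0(\mathcal O_{\P^{2+e}}(1))$ in explicit coordinates on the blow-up model and exhibits $Y$ as the rank-one locus of the resulting $2\times e$ matrix of linear forms, reading off the scroll type $S(a_1,a_2,a_3)$ directly from that matrix. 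You instead construct $Y$ abstractly as $\P\bigl(\phi_*\mathcal O_S(-K_S)\bigr)$ for the conic fibration $\phi:S\to\P^1$ and determine the splitting type from the sequence $h^0(\P^1,\mathcal E(-k))=h^0(S,-K_S-kD)$; your sequences $(6,3,0)$, $(7,4,1,0)$, $(8,5,2,0)$, $(9,6,3,1,0)$, $(9,6,3,0)$ are all correct and yield the asserted scroll types. Your route is coordinate-free and makes the scroll-type computation transparent, at the cost of having to verify (as you note) that $\P(\mathcal E)$ really is a smooth rational normal scroll of degree $d-2$ receiving $S$ compatibly with its anticanonical embedding; this does follow from $a_i\geq 1$ and $h^0(\mathcal E)=d+1$, which your cohomology tables supply. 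The paper's explicit matrices avoid this verification entirely (the $2$-minors visibly cut out a scroll of the stated type containing $S$) and have the side benefit of producing the defining quadrics, which the paper reuses elsewhere; your version generalizes more readily since it never leaves the intrinsic geometry of $S$.
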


\begin{proof}
$(1)$ We may assume that $S$ is a blow-up of $\P^2 _{x_0,x_1,x_2}$ at the projective frame
\[
\Gamma = \{ P_0 = [1:0:0], P_1 = [0:1:0], P_2 = [0:0:1], P_3 = [1:1:1] \} \subset \P^2.
\]
We have the following set as a basis for $I(\Gamma)_3$:
\[
\{ x_0^2 x_1 - x_1 x_2^2, x_0 x_1^2 - x_1 x_2^2, x_0^2 x_2 - x_1 x_2^2, x_0 x_1 x_2 - x_1 x_2^2, x_1^2 x_2 - x_1 x_2^2, x_0 x_2^2 - x_1 x_2^2 \}
\]
These cubics passing through $\Gamma$ defines a rational map $\P^2 \dashrightarrow \P^5$ where the closure of the image is the blow-up $S \to \P^2$ along $\Gamma$. We need to describe all smooth rational normal threefold scrolls $Y$ which contain $S$.

Let $h$ be the pull-back of the hyperplane divisor of $\P^2$, and let $e_i$ be the exceptional divisor corresponding to $P_i \in \Gamma$ for each $0 \le i \le 3$. Then for any $Y$, the restriction $\mathcal O_Y (F)|_S$ must be the class of a plane conic, and hence either $\mathcal O_Y (F)|_S = \mathcal O_S (h-e_i)$ or $\mathcal O_S (2h - e_0 - e_1 -e_2 - e_3)$.

We first consider the case $\mathcal O_Y (F)|_S = \mathcal O_S (h-e_0)$. We have
$$H^0 (S, \mathcal O_S (h-e_0)) = \mathbb {K} \langle x_1, x_2 \rangle$$
and
$$H^0 (S, \mathcal O_S (2h - e_1 - e_2 - e_3)) = \mathbb {K}  \langle x_0^2 - x_1 x_2 , x_0 x_1 - x_1 x_2 , x_0 x_2 - x_1 x_2 \rangle,$$
and hence the multiplication map
\[
\tau : H^0 (S, \mathcal O_S (h-e_0)) \otimes H^0 (S, \mathcal O_S (2h - e_1 - e_2 - e_3)) \to H^0 (\P^5, \mathcal O_{\P^5}(1))
\]
is represented by the linear matrix
\begin{equation*}
\begin{CD}
M (\tau) & \quad = \quad & \begin{pmatrix}
\varphi (x_0^2 x_1 - x_1^2 x_2 ) & \varphi (x_0 x_1^2 - x_1^2 x_2 )   & \varphi ( x_0 x_1 x_2 - x_1^2 x_2 )\\
\varphi (x_0^2 x_2 - x_1 x_2^2 ) & \varphi (x_0 x_1 x_2 - x_1 x_2^2 ) & \varphi (x_0 x_2^2 - x_1 x_2^2 )
\end{pmatrix} \\
& = &  \begin{pmatrix}
 z_0 - z_4 & z_1 - z_4 & z_3 - z_4 \\
 z_2 & z_3 & z_5
 \end{pmatrix}.
\end{CD}
\end{equation*}
Its $2$-minors define a rational normal $3$-fold scroll $Y_0 \cong S(1,1,1)$ in $\P^5$. Similarly, we have $Y_1, Y_2, Y_3$ by choosing $\mathcal O_Y (F)|_S$ as $\mathcal O_S (h - e_i)$ for $i=1,2,3$.

Finally, suppose that $\mathcal O_Y (F)|_S = \mathcal O_S (2h - e_0 - e_1 - e_2 - e_3)$. Then we have
$$H^0 (S, \mathcal O_S(2h-e_0-e_1-e_2-e_3)) = \mathbb{K} \langle x_0 x_1 - x_1 x_2 , x_0 x_2 - x_1 x_2 \rangle$$
and
$$H^0 (S, \mathcal O_S (h)) = \langle x_0 , x_1, x_2 \rangle,$$
and hence the multiplication map
\[
\tau : H^0 (S, \mathcal O_S(2h-e_0-e_1-e_2-e_3)) \otimes H^0 (S, \mathcal O_S (h)) \to H^0 (\P^5, \mathcal O_{\P^5}(1))
\]
is represented by the matrix
\[
M(\tau) = \begin{pmatrix}
z_0 - z_3 & z_1 - z_4 & z_3 \\
z_2 - z_3 & z_3 - z_4 & z_5
\end{pmatrix}.
\]

\noindent $(2)$ It follows from the exactly same procedure. Note that the restriction $\mathcal O_Y (F) |_S$ must be $\mathcal O_S (h - e_i)$ for $0 \le i \le 8-d$ for these cases. We just list up the multiplication map for each $d=6,7,8$ and $\mathcal O_Y (F)|_S = \mathcal O_S (h - e_0)$ for simplicity.

\begin{itemize}
\item ($d=6$) The multiplication map is represented by the matrix
\[
M(\tau) =
 \begin{pmatrix}
 z_0 & z_1 & z_3 & z_4 \\
 z_2 & z_3 & z_5 & z_6
 \end{pmatrix},
\]
and thus we have three rational normal 3-fold scrolls $Y \cong S(1,1,2)$ containing $S$.

\item ($d=7$) The multiplication map is represented by the matrix
\[
M(\tau) = \begin{pmatrix}
 z_0 & z_1 & z_3 & z_4 & z_6 \\
 z_2 & z_3 & z_5 & z_6 & z_7
 \end{pmatrix},
\]
and thus we have two rational normal 3-fold scrolls $Y \cong S(1,2,2)$ containing $S$.

\item ($d=8$) The multiplication map is represented by the matrix
\[
M(\tau) = \begin{pmatrix}
z_0 & z_1 & z_2 & z_4 & z_5 & z_7 \\
z_3 & z_4 & z_5 & z_6 & z_7 & z_8
\end{pmatrix},
\]
and thus we have one 3-fold scroll $Y \cong S(1,2,3)$ containing $S$.
\end{itemize}

\noindent $(3)$ In the case $S = \nu_2 (\P^1 \times \P^1)$, we have $\operatorname{Pic}(S) = \Z h_1 \oplus \Z h_2$ where $h_1, h_2$ are the pull-backs of $\mathcal O_{\P^1}(1)$ by the first and the second projection $\P^1 \times \P^1 \to \P^1$. Since $S$ is embedded by $\mathcal O_S (2h_1 + 2h_2)$, we may write $S$ as the parametrized surface
\[
S = \{[x_0^2 y_0^2 : x_0^2 y_0 y_1 : x_0^2 y_1^2 : x_0 x_1 y_0^2 : x_0 x_1 y_0 y_1 : x_0 x_1 y_1^2 : x_1^2 y_0^2 : x_1^2 y_0 y_1 : x_1^2 y_1^2]\} \subset \mathbb{P}^8
\]
so that $[x_0 : x_1]$ and $[y_0 : y_1]$ run over $\P^1$. In this surface, there are exactly two classes of plane conics, namely, $\mathcal O_S (h_1)$ and $\mathcal O_S (h_2)$. As similar as above, in the case when $\mathcal O_Y (F)|_S = \mathcal O_S (h_1)$, we have the multiplication map
\[
\tau : H^0 (S, \mathcal O_S (h_1)) \otimes H^0 (\mathcal O_S (h_1 + 2h_2)) \to H^0 (\P^8 , \mathcal O_{\P^8}(1))
\]
which can be represented by the matrix
\[
M (\tau) = \begin{pmatrix}
z_0 & z_1 & z_2 & z_3&z_4 &z_5 \\
z_3&z_4&z_5 & z_6&z_7&z_8
\end{pmatrix},
\]
and thus we have two rational normal 3-fold scrolls $Y \cong S(2,2,2)$ containing $S$.
\end{proof}

\section{Proof of Theorem \ref{thm:main1}}\label{sect:MainThmProof}
\noindent This section is devoted to prove the Theorem \ref{thm:main1}. We first need to arrange the known cases about the varieties of almost minimal degree $X$ and $\mu (X)$ of them.

\begin{lemma}\label{lem:dim and codimension inequalities}
Let $X \subset \P^{n+e}$ be an $n$-dimensional smooth del Pezzo variety with $n \geq 2$ and $e \geq 3$. If $\mu (X) \geq 1$, then

\renewcommand{\descriptionlabel}[1]%
             {\hspace{\labelsep}\textrm{#1}}
\begin{description}
\setlength{\labelwidth}{13mm} \setlength{\labelsep}{1.5mm}
\setlength{\itemindent}{0mm}
\item[{\rm (1)}] $e \geq n+1$.
\smallskip

\item[{\rm (2)}] The Picard number of $X$ is at least two.
\smallskip

\item[{\rm (3)}] If $n \geq 3$ then $n = 3$ and $e$ is an even number.
\end{description}
\end{lemma}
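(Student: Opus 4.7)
The overall plan is to fix, via Lemma~\ref{lem:divisor class of del Pezzo manifolds}, a smooth rational normal scroll $Y = S(a_1, \ldots, a_{n+1}) \subset \P^{n+e}$ containing $X$ on which $X \equiv 2H + (2-e)F$; parts (1)--(3) will each be a consequence of this ambient structure. Part (1) is immediate: the smoothness of $Y$ forces every $a_i \geq 1$, so $e = a_1 + \cdots + a_{n+1} \geq n+1$.

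For part (2) I would show that $H|_X$ and $F|_X$ are numerically independent by pairing them against two $1$-cycles on $X$, namely $C_1 = [X]\cdot[F]\cdot[H]^{n-2}$ and $C_2 = [X]\cdot[H]^{n-1}$. Using $H^{n+1} = e$, $H^n \cdot F = 1$ and $F^2 = 0$ on $Y$, a short computation gives the intersection matrix
\[
\begin{pmatrix} H\cdot C_1 & H\cdot C_2 \\ F\cdot C_1 & F\cdot C_2 \end{pmatrix} \;=\; \begin{pmatrix} 2 & d \\ 0 & 2 \end{pmatrix},
\]
which has determinant $4 \neq 0$. Hence the Picard number of $X$ is at least two.

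For part (3) I would combine (1) with the classification of smooth del Pezzo varieties of dimension at least three in Theorem~\ref{thm:del Pezzo}(3). The inequality $e \geq n+1$ kills $\P^2 \times \P^2 \subset \P^8$ (with $n=4$, $e=4$) and every smooth linear section of $\mathbb{G}(1,\P^4) \subset \P^9$ (all with $e=3$). We are thus forced into $n = 3$ and $d \in \{6,7,8\}$, i.e.\ $e \in \{4,5,6\}$, so the parity statement reduces to eliminating $e=5$.

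The main obstacle is precisely this last step. In the $e=5$ case, $X$ is the inner projection of $\nu_2(\P^3)$ from a point, realised as $\operatorname{Bl}_p\P^3$ embedded by $|2h-E|$, and the only candidate scroll is $Y \cong S(1,1,1,2)$, with $X \equiv 2H - 3F$. Writing $F|_X = \alpha h + \beta E$ in $\operatorname{Pic}(X) = \Z h \oplus \Z E$, and using $H|_X = 2h-E$ together with the standard intersection numbers $h^3 = E^3 = 1$, $h^2 E = h E^2 = 0$, the identities $F|_X \cdot (H|_X)^2 = X \cdot F \cdot H^2 = 2$ and $(F|_X)^2 \cdot H|_X = X \cdot F^2 \cdot H = 0$ unfold into
\[
4\alpha + \beta \;=\; 2 \qquad \text{and} \qquad \beta^2 \;=\; 2\alpha^2,
\]
and the irrationality of $\sqrt{2}$ forbids any integer solution. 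This contradiction rules out $e=5$ and completes the parity claim; everything else is routine scroll bookkeeping via standard intersection theory.
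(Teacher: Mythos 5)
Your proposal is correct, but parts (2) and (3) take genuinely different routes from the paper. For (2), the paper argues softly: the fibres $X \cap F_\lambda$, $\lambda \in \P^1$, are pairwise disjoint effective divisors on $X$, which is impossible when the Picard number is one (every effective divisor would then be proportional to the ample generator, and two such divisors on a variety of dimension $\geq 2$ must meet); your explicit intersection matrix $\left(\begin{smallmatrix} 2 & d \\ 0 & 2 \end{smallmatrix}\right)$ establishes the same numerical independence of $H|_X$ and $F|_X$ and is equally valid. For (3), the paper never invokes Fujita's classification: it observes that $|2H+(2-e)F|$ can contain an irreducible member only if $2a_{n-1}+2-e \geq 0$ (otherwise every section is divisible by the last scroll coordinate), which yields $2 \geq a_0+\cdots+a_{n-2}-a_{n-1}+a_n \geq n-1$, hence $n \leq 3$, and for $n=3$ forces $a_0=a_1=1$ and $a_2=a_3=\tfrac{e}{2}-1$, so $e$ is even. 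You instead combine part (1) with Theorem~\ref{thm:del Pezzo}(3) to reduce to $n=3$, $e \in \{4,5,6\}$, and then exclude $e=5$ by the Diophantine contradiction $4\alpha+\beta=2$, $\beta^2=2\alpha^2$ on $\operatorname{Pic}(\mathrm{Bl}_p\P^3)$; the computation checks out (and integrality is not even needed, since $\beta^2=2\alpha^2$ has no nonzero rational solutions). The paper's route is self-contained within scroll geometry and additionally pins down the scroll type $S(1,1,\tfrac{e}{2}-1,\tfrac{e}{2}-1)$; yours leans on the external classification but gives a concrete and independent exclusion of the $d=7$ case.
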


\begin{proof}
Since $\mu (X) \geq 1$, there exists an $(n+1)$-fold variety $Y \subset \P^{n+e}$ of minimal degree which contains $X$. Then $Y$ is a smooth rational normal scroll by Lemma \ref{lem:divisor class of del Pezzo manifolds}.

\noindent $(1)$ We write $Y = \P_{\P^1} (\mathcal{E})$ as a projective bundle where
\begin{equation*}
\mathcal{E} = \mathcal{O}_{\P^1} (a_0) \oplus \mathcal{O}_{\P^1} (a_1) \oplus \cdots \oplus
\mathcal{O}_{\P^1} (a_n)
\end{equation*}
for some positive integers $a_0 \leq a_1 \leq \cdots \leq a_n$ such that $a_0 + a_1 + \cdots + a_n = e$. This shows that $e \geq n+1$.

\noindent $(2)$ Let $\pi : Y \rightarrow \P^1$ be the projection map. Then the divisors in $O_X (X \cap F )$ of $X$ are pairwise disjoint. This is impossible if the Picard number of $X$ is equal to one. Thus the Picard number of $X$ must be at least two.

\noindent $(3)$ By Lemma \ref{lem:divisor class of del Pezzo manifolds}, we have $X \equiv 2H+(2-e)F$. In particular, the line bundle $\mathcal{O}_Y (2H+(2-e)F)$ must have an irreducible section defining $X$. On the other hand, if $2a_{n-1} +2-e$ is negative then every section in $| \mathcal{O}_Y (2H+(2-e)F) |$ is reducible since
\begin{equation*}
h^0 (\P^1 ,\mathcal{O}_{\P^1} (a_i + a_j +2-e)) = 0 \quad \mbox{whenever $0 \le i \le j \le n-1$.}
\end{equation*}
Therefore $2a_{n-1} +2-e \geq 0$ and hence it follows that
\begin{equation*}
2 \geq a_0 + \cdots + a_{n-2} - a_{n-1} + a_n \geq n-1
\end{equation*}
Thus $n$ is at most $3$. In particular, if $n \geq 3$ then $n=3$.

Suppose that $n =3$. Then it holds that $\frac{e}{2} -1 \leq a_2 \leq a_3$ and hence
\begin{equation*}
e = a_0 +a_1 + a_2 + a_3 \geq 2 + (\frac{e}{2} -1)+(\frac{e}{2} -1) =e.
\end{equation*}
Thus we have $a_0 = a_1 = 1$ and $a_2 = a_3 = \frac{e}{2} -1$. In particular, $e$ must be even.
\end{proof}

\begin{corollary}\label{cor:delta zero}
If $X \subset \P^{n+e}$ is either $\nu_3 (\P^2 ) \subset \P^9$ or $\nu_2 (\P^3 ) \subset \P^9$ or ${\rm Bl}_p (\P^3 ) \subset \P^8$ or $\P^2 \times \P^2 \subset \P^8$ or else a $k$-dimensional smooth linear section of $\mathbb{G} (1,\mathbb{P}^4) \subset \P^9$ for some $3 \leq k \leq 6$, then $\mu (X) = 0$.
\end{corollary}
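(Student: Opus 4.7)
The plan is to invoke the contrapositive of Lemma \ref{lem:dim and codimension inequalities}: if $X$ is a smooth del Pezzo variety with $n \geq 2$, $e \geq 3$, and $\mu(X) \geq 1$, then necessarily $(1)$ $e \geq n+1$, $(2)$ the Picard number of $X$ is at least two, and $(3)$ in the range $n \geq 3$ one must in fact have $n = 3$ with $e$ even. Each of the five families listed in the corollary is a smooth del Pezzo variety with $n \geq 2$ and $e \geq 3$ by the classification in Theorem \ref{thm:del Pezzo}, so the lemma applies, and it suffices to exhibit a single violation of $(1)$--$(3)$ in each case.

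First I would tabulate $(n, e)$ and the Picard number for each entry in the list. For $\nu_3 (\P^2) \subset \P^9$ (with $n=2$, $e=7$) and $\nu_2 (\P^3) \subset \P^9$ (with $n=3$, $e=6$), the underlying variety is a projective space and hence has Picard number one, so condition $(2)$ fails. For the inner projection ${\rm Bl}_p (\P^3) \subset \P^8$ of $\nu_2 (\P^3)$ we have $n=3$ and $e=5$; since $e$ is odd, condition $(3)$ fails. For $\P^2 \times \P^2 \subset \P^8$ we have $n=4 > 3$, so the equality $n=3$ required by $(3)$ fails. Finally, for each smooth linear section $X_k = \mathbb{G}(1,\P^4) \cap \Lambda^{k+3} \subset \P^{k+3}$ with $3 \leq k \leq 6$, one has $n = k$ and $e = 3$, whence $e = 3 < k+1 = n+1$ and condition $(1)$ fails.

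The argument is essentially bookkeeping once Lemma \ref{lem:dim and codimension inequalities} is in place, since all the genuine content has been absorbed into that lemma via the analysis of the divisor class $X \equiv 2H + (2-e)F$ on the enveloping rational normal scroll. The only thing that could go wrong is a miscount of $(n, e)$ or the Picard number of one of the listed varieties, but each of these numbers is immediate from the descriptions in Theorem \ref{thm:del Pezzo}. Consequently, in every listed case a necessary condition for $\mu(X) \geq 1$ is violated, and we conclude $\mu(X) = 0$.
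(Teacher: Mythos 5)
Your proof is correct and follows essentially the same route as the paper: both apply Lemma \ref{lem:dim and codimension inequalities} and exhibit a violated necessary condition for each listed variety, with all the numerical data read off from Theorem \ref{thm:del Pezzo}. The only cosmetic difference is that for the linear sections $X_k$ of $\mathbb{G}(1,\P^4)$ you invoke the failure of condition $(1)$ (since $e=3<k+1$), whereas the paper invokes condition $(2)$ (Picard number one); both are valid.
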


\begin{proof}
If $X$ is $\nu_3 (\P^2 )$ or $\nu_2 (\P^3 )$ or a $k$-dimensional smooth linear section of $\mathbb{G} (1,\mathbb{P}^4) \subset \P^9$ for some $3 \leq k \leq 6$, then its Picard number is equal to one. Therefore $\mu (X) = 0$ by Lemma \ref{lem:dim and codimension inequalities}-$(2)$. If $X$ is ${\rm Bl}_p (\P^3 )$ in $\P^8$, then $e = 5$ is odd and hence $\mu (X) = 0$ by Lemma \ref{lem:dim and codimension inequalities}-$(3)$. If $X$ is $\P^2 \times \P^2$ in $\P^8$, then $n = 4$ and hence $\mu (X) = 0$ by Lemma \ref{lem:dim and codimension inequalities}-$(3)$.
\end{proof}

Now, we are ready to prove Theorem \ref{thm:main1}.\\

\noindent {\bf Proof of Theorem \ref{thm:main1}.} Throughout the proof, we assume that $X$ is not a cone since $\mu (X) = \mu(X_0)$ if $X$ is a cone over a variety $X_0$.
\smallskip

\noindent $(i) \Rightarrow (ii)$ : If $d=e+1$, then $\mu (X) > 0$. Indeed, choose a smooth point $p$ of $X$. Then $Y = \mbox{Join} (p,X)$ is an $(n+1)$-dimensional variety of minimal degree which contains $X$. Thus $\mu (X) >0$.

If $d\geq e+3$ and $\beta_{e-1,1} (X) \neq 0$ then $\mu (X) >0$ by Theorem \ref{thm:Kp1}. Also, if $d = e+2$ and either $n=1$ or $X$ is singular then $\mu (X) > 0$ by Proposition \ref{prop:first computation of mu(X)}.

Now, suppose that $\beta_{e-1,1} (X) \neq 0$ and $\mu (X) = 0$. Then from the previous explanation, $X$ must be a smooth del Pezzo variety of dimension $n \geq 2$. We use the classification of smooth del Pezzo varieties in Theorem \ref{thm:del Pezzo}. If $n =2$ and $\mu (X) = 0$, then $X$ must be $\nu_3 (\P^2 )$ by Proposition \ref{prop:embedding scroll del Pezzo surfaces}. If $n = 3$ and $\mu (X) = 0$, then $X \neq \P^1 \times \P^1 \times \P^1$ by Proposition \ref{prop:del Pezzo 3fold Segre}. This completes the proof.
\smallskip

\noindent $(ii) \Rightarrow (i)$ : Since all varieties $X$ listed in $(ii)$ are del Pezzo, it holds that $\beta_{e-1,1}(X) >0$. Also, by Corollary \ref{cor:delta zero}, we have $\mu (X) = 0$ except the case where $X$ is the threefold $\P_{\P^2} (\mathcal T_{\P^2})$ in $\P^7$. Thus it remains to show that $\mu ( \P_{\P^2} (\mathcal T_{\P^2}) ) = 0$. To this aim, let us suppose that $X=\P_{\P^2} (\mathcal T_{\P^2})$ is contained in a fourfold $Y \subset \P^7$ of minimal degree. Then $Y$ must be smooth by Proposition \ref{prop:smoothness of embedding scroll}. Therefore $Y = S(1,1,1,1)$ and thus
\begin{equation*}
Y = S(1,1,1,1) = \P^3 \times \P^1 \subset \P^7  \quad \mbox{and} \quad X \equiv 2H-2F
\end{equation*}
(cf. Lemma \ref{lem:divisor class of del Pezzo manifolds}). Furthermore,
\begin{equation*}
H^0 (Y,\mathcal{O}_{Y} (2H-2F)) \cong \bigoplus_{0 \leq i \leq j \leq 3} H^0 (\P^1 ,\mathcal{O}_{\P^1} ) {x_i x_j}
\end{equation*}
and hence every divisor in $|\mathcal{O}_{Y} (2H-2F)|$ is the product of $\P^1$ and a quadric in $\P^3$. Therefore, $X = \P^1 \times Q$ where $Q \subset \P^3$ is a smooth quadric, in other words, $X=\P^1 \times \P^1 \times \P^1$. This is a contradiction and hence $\mu (X) = 0$.  

\noindent $(ii) \Leftrightarrow (iii)$ By Theorem \ref{thm:del Pezzo}, del Pezzo varieties appeared in the list $(ii)$ are exactly characterized by the condition that their Picard numbers are at most $n-1$.
\qed\\

\section{Embedding scrolls as syzygy schemes}\label{sect:SyzygyScheme}
\noindent 
In this section, we provide an alternative idea to understand Theorem \ref{thm:main1} via syzygy schemes. M. Green first introduced a geometric object associated to a Koszul cohomology class $\gamma \in \mathcal K_{p,1} (R(X), R_1)$ which is defined by minimal quadrics to represent $\gamma$ \cite{Gr1}. Such an object is often called the \emph{syzygy scheme}, and it is surprisingly useful to understand the structure of quadrics in $I(X)$ when either $X$ has nonvanishing $\mathcal K_{p,1} (R(X), R_1)$ with high $p$, or some Koszul classes in $\mathcal K_{p,1} (R(X),R_1)$ having small syzygy ranks compared to $p$, see for instance \cite{Eh, GvB, AE}. In this section we will frequently use $\mathcal K_{p-1,q+1} (I(X),R_1) \simeq \mathcal K_{p,q} (R(X), R_1)$ since $X$ is non-degenerate. 



\begin{definition}[\cite{Gr1, Eh, AN}]
Let $X \subset \mathbb{P}^{n+e}$ be as above, and let $R = \mathbb{K}[x_0, \cdots, x_{n+e}]$ be the homogeneous coordinate ring of $\mathbb{P}^{n+e}$. Let $\gamma \in  \mathcal K_{p-1,2} (I(X), R_1)$
be a nonzero Koszul cohomology class minimally represented as
\[
\gamma = \sum_{|J|=p-1} v_J \otimes q_J
\]
where $v_J \in \bigwedge^{p-1} R_1$ and $q_J \in I(X)_2$. The number of summands is called the \emph{rank} of the syzygy $\gamma$. We define the \emph{syzygy ideal of $\gamma$} as the ideal generated by the quadrics $q_J$, and \emph{the syzygy scheme $Syz(\gamma)$} as the scheme defined by the syzygy ideal of $\gamma$.
\end{definition}


We first observe that syzygy schemes provide a simple explanation on $(n+1)$-folds $Y$ of minimal degree containing $X$ for many cases as follows. The idea of the proof is essentially due to M. Green \cite[Section 3-(c)]{Gr2}. 

\begin{theorem}\label{thm:uniqueness and syzygy scheme}
Let $X \subset \P^{n+e}$ be a non-degenerate projective irreducible variety of dimension $n \ge 1$, codimension $e \geq 3$ and degree $d$. Suppose that $\beta_{e-1,1} (X) \neq 0$. If either $d=e+2$ and $X$ is not arithmetically Cohen-Macaulay, or else $d \geq e+3$, then $\mu (X) =1$ and thus there exists exactly one $(n+1)$-dimensional variety $Y$ of minimal degree which contains $X$.
\end{theorem}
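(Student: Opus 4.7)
The plan is to pin down $Y$ intrinsically via the syzygy scheme of a nonzero Koszul class in bidegree $(e-1,1)$. I would first invoke existence and the precise value of $\beta_{e-1,1}(X)$: in the case $d \geq e+3$, Theorem~\ref{thm:Kp1}.(3) guarantees that $X$ lies on some $(n+1)$-fold $Y \subset \P^{n+e}$ of minimal degree, and equation~\eqref{eq:d geq e+3} pins down $\beta_{e-1,1}(X) = e-1$; in the case $d = e+2$ with $X$ not arithmetically Cohen--Macaulay, Theorem~\ref{thm:Brod-Schenzel VAMD}.(3) provides such a $Y$, which is moreover a rational normal scroll, and again $\beta_{e-1,1}(X) = e-1$. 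In either case we obtain $\beta_{e-1,1}(X) = \beta_{e-1,1}(Y) = e-1$.

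Next, under the identification $\mathcal{K}_{e-1,1}(R(Z),R_1) \simeq \mathcal{K}_{e-2,2}(I(Z),R_1)$ valid for any non-degenerate $Z$, the inclusion $I(Y) \subseteq I(X)$ induces an injection $\mathcal{K}_{e-2,2}(I(Y),R_1) \hookrightarrow \mathcal{K}_{e-2,2}(I(X),R_1)$ (both realized as kernels of Koszul differentials on $\wedge^{e-2} R_1 \otimes I(Y)_2$ and $\wedge^{e-2} R_1 \otimes I(X)_2$ respectively), which by the matching of dimensions must be an isomorphism. Consequently every nonzero Koszul class $\gamma \in \mathcal{K}_{e-2,2}(I(X),R_1)$ admits a representation $\gamma = \sum v_J \otimes q_J$ with $q_J \in I(Y)_2$, so the intrinsic minimal span $W(\gamma) \subseteq I(X)_2$ of the quadrics appearing in any minimal representation of $\gamma$ automatically lies inside $I(Y)_2$.

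The crucial step is to prove the reverse inclusion: for every nonzero such $\gamma$, $W(\gamma)$ equals all of $I(Y)_2$, whence $Syz(\gamma) = Y$. Assuming first that $Y$ is a rational normal scroll, its ideal is generated by the $\binom{e}{2}$ two-by-two minors $q_{ij}$ of a $2 \times e$ matrix of linear forms, and the Eagon--Northcott resolution exhibits $\mathcal{K}_{e-2,2}(I(Y),R_1)$ as an $(e-1)$-dimensional space of top syzygies indexed by $S^{e-2}(\mathbb{K}^2)$. Each top syzygy takes the form $\gamma_k = \sum_{i<j} f_{ij}^{(k)} \otimes q_{ij}$ with $f_{ij}^{(k)} \in \wedge^{e-2} R_1$ a specific nonzero wedge in the matrix entries of the columns other than $i$ and $j$. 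A direct inspection shows that for each fixed pair $(i,j)$ the wedges $\{f_{ij}^{(k)}\}_k$ are linearly independent in $\wedge^{e-2} R_1$; hence $\sum_k c_k f_{ij}^{(k)} \neq 0$ whenever $(c_k) \neq (0,\dots,0)$, so every nonzero combination of top syzygies involves all $\binom{e}{2}$ minors with nonzero coefficients, forcing $W(\gamma) = I(Y)_2$. The remaining case in which $Y$ is the Veronese surface in $\P^5$, which can only occur for $(n,e)=(1,4)$, is handled by an analogous computation on the resolution of the Veronese, or can be ruled out by a Brill--Noether/genus argument on $X$ showing no rational normal scroll and at most one Veronese can contain $X$.

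Uniqueness then follows immediately: if $Y'$ is another $(n+1)$-fold of minimal degree containing $X$, the same reasoning applied to $Y'$ identifies $\mathcal{K}_{e-2,2}(I(Y'),R_1)$ with $\mathcal{K}_{e-2,2}(I(X),R_1)$ and yields $Syz(\gamma) = Y'$ for any nonzero $\gamma$ in this common space. Since $Syz(\gamma)$ depends only on $\gamma$, we conclude $Y = Y'$, i.e., $\mu(X) = 1$. The principal technical obstacle is the non-cancellation statement in the Eagon--Northcott analysis, namely the linear independence of $\{f_{ij}^{(k)}\}_k$ for each fixed $(i,j)$; this ultimately reduces to the non-degeneracy of the $2 \times e$ matrix of linear forms defining the scroll $Y$.
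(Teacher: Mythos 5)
Your overall architecture coincides with the paper's: produce $Y$ from Theorem \ref{thm:Kp1} (resp.\ Theorem \ref{thm:BS}), use $\beta_{e-1,1}(X)=\beta_{e-1,1}(Y)=e-1$ to identify $\mathcal K_{e-2,2}(I(Y),R_1)$ with $\mathcal K_{e-2,2}(I(X),R_1)$ by a dimension count, show that every nonzero class $\gamma$ in this common space satisfies $Syz(\gamma)=Y$, and deduce uniqueness by applying the same reasoning to any second $(n+1)$-fold. The divergence is in the crucial step: the paper simply invokes the general lower bound (Green; \cite[Proposition 3.26]{AN}) that the syzygy ideal of a nonzero class in $\mathcal K_{e-1,1}$ contains at least $\binom{e}{2}$ linearly independent quadrics, which together with $\dim I(Y)_2=\binom{e}{2}$ forces $Syz(\gamma)=Y$ uniformly, for scrolls and for cones over the Veronese surface alike. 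You replace this citation by an explicit Eagon--Northcott computation, and that is where a genuine gap appears.

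Concretely, the syzygy ideal $W(\gamma)$ of $\gamma=\sum_{i<j}f_{ij}\otimes q_{ij}$ is the image of the contraction map $\bigwedge^{e-2}R_1^{\vee}\to I(Y)_2$ sending $\phi$ to $\sum_{i<j}\langle\phi,f_{ij}\rangle\, q_{ij}$; hence $W(\gamma)=I(Y)_2$ if and only if the coefficients $\{f_{ij}\}_{i<j}$ are linearly independent in $\bigwedge^{e-2}R_1$ \emph{as $(i,j)$ varies}. What you establish --- linear independence of $\{f_{ij}^{(k)}\}_k$ for each fixed $(i,j)$ --- only shows that every coefficient $\sum_k c_k f_{ij}^{(k)}$ is nonzero, and that is strictly weaker: the class $v\otimes q_1+v\otimes q_2$ has both coefficients nonzero yet its syzygy ideal is one-dimensional. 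So ``all $\binom{e}{2}$ minors appear with nonzero coefficient'' does not force $W(\gamma)=I(Y)_2$; you need the transposed independence statement, which your inspection does not address. A secondary defect is the case where $Y$ is a cone over the Veronese surface (possible a priori when $d\geq e+3$): this occurs exactly when $e=4$, for any $n$ (the vertex has dimension $n-2$), not only for $(n,e)=(1,4)$, and ``an analogous computation or a Brill--Noether argument'' is a placeholder rather than a proof. Both issues vanish if you substitute the cited general bound for the explicit computation, which is precisely what the paper does.
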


\begin{proof}
By Theorem \ref{thm:Kp1} and Theorem \ref{thm:BS}, we already know that $\mu (X) >0$, indeed, $X$ is contained in an $(n+1)$-fold $Y $ of minimal degree. From this inclusion, we have a nonzero class $\gamma \in \mathcal K_{e-2,2} (I(Y), R_1) \subset \mathcal K_{e-2,2} (I(X), R_1)$. Note that the syzygy scheme $Syz(\gamma)$ is defined by $\ge \binom{e}{2}$ independent quadrics in $I(Y)$, cf. \cite{Gr1} and \cite[Proposition 3.26]{AN}. Since $\dim I(Y)_2 = \binom{e}{2}$, the only possibility is that $Syz(\gamma)$ and $Y$ coincide for any nonzero Koszul class $\gamma \in \mathcal K_{2-2,2} (I(Y),R_1)$.

When $d=e+2$ and $X$ is not arithmetically Cohen-Macaulay, Theorem \ref{thm:BS} implies that $\beta_{e-1,1}(X) = \beta_{e-1,1}(Y) = e-1$, in particular, $\mathcal K_{e-2,2} (I(Y), R_1) = \mathcal K_{e-2,2} (I(X), R_1)$. Indeed, if there is another $(n+1)$-fold $Z$ of minimal degree which contains $X$, then any nonzero element $\gamma^{\prime} \in \mathcal K_{e-2,2} (I(Z),R_1)$ will define the syzygy scheme
\[
Z = Syz(\gamma^{\prime}) = Syz(\gamma) = Y.
\]
We conclude that $\mu (X) = 1$.

The same argument also holds when $d \ge e+3$ since we have $\beta_{e-1,1}(X) = \beta_{e-1,1} (Y) = e-1$, and hence $\mathcal K_{e-2,2} (I(Y), R_1) = \mathcal K_{e-2,2} (I(X), R_1)$ as in \cite[Theorem 3.5]{NP}.
\end{proof}



We now focus on smooth and arithmetically Cohen-Macaulay varieties of almost minimal degree (= smooth del Pezzo varieties). We first describe the syzygy scheme of a Koszul class $\gamma \in \mathcal K_{e-2,2}(I(X),R_1)$ when $X$ is a smooth del Pezzo variety of maximal dimension (= not a smooth linear section of a higher dimensional del Pezzo variety of the same degree). In these cases, we see that $Syz(\gamma)$ is not an $(n+1)$-fold of minimal degree containing $X$. The following observation is based on the computations using Macaulay2.

\begin{proposition}\label{prop:SyzGen}
Let $X$ be a smooth del Pezzo varieties of degree $5 \le d \le 9$ (so that the codimension $e=d-2$) of maximal dimension. Then, the syzygy scheme $Syz(\gamma)$ of a general Koszul class $\gamma \in K_{e-2,2} (I(X), R_1)$ is
\begin{enumerate}
\item $(d=5)$ $X = \mathbb{G}(1, \mathbb{P}^4) \subset \mathbb{P}^9$ and $Syz(\gamma)$ is the union of $X$ and a linear subspace $\Lambda \cong \mathbb{P}^5$ intersect along a quadric $4$-fold defined by the Pl\"{u}cker relation on the $6$ coordinates of $\Lambda$.

\item $(d=6)$ $X$ is either a Segre fourfold $\mathbb{P}^2 \times \mathbb{P}^2 \subset \mathbb{P}^8$ and $Syz (\gamma)$ is the union of $X$ and a point  outside $X$, or $X = \mathbb{P}^1 \times \mathbb{P}^1 \times \mathbb{P}^1 \subset \mathbb{P}^7$ and $Syz (\gamma)$ is a union of $X$ and a line intersecting at two points.

\item $(d=7)$ $X$ is an inner projection of $v_2 (\mathbb{P}^3)$ into $\mathbb{P}^8$ and $Syz(\gamma) = X$.

\item $(d=8)$ $X = v_2 (\mathbb{P}^3) \subset \mathbb{P}^9$ and $Syz(\gamma) = X$.
\item $(d=9)$ $X = v_3 (\mathbb{P}^2) \subset \mathbb{P}^9$ and $Syz(\gamma) = X$.
\end{enumerate}

In any case, we have $\displaystyle Syz_{e-1}(X) = \bigcap_{\gamma \in K_{e-2,2} (I(X),R_1)} Syz (\gamma) = X$.
\end{proposition}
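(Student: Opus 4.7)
The plan is to carry out, case by case, an explicit Macaulay2 computation of the syzygy ideal of a generic Koszul cycle $\gamma \in \mathcal{K}_{e-2,2}(I(X), R_1)$, and to interpret the resulting scheme geometrically. For each smooth del Pezzo variety $X$ in the list, I would start from a standard embedding, write down $I(X)$ explicitly, and compute a minimal graded free resolution of $R(X)$. Formula (\ref{eq:del Pezzo}) gives $\beta_{e-1,1}(X) = \binom{e+1}{2} - 1$, fixing $\dim \mathcal{K}_{e-2,2}(I(X),R_1)$, and one can then sample a generic cycle $\gamma = \sum_{|J|=e-2} v_J \otimes q_J$ as a random combination of a chosen basis. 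The syzygy scheme $Syz(\gamma) \subset \P^{n+e}$ is the common zero locus of the $q_J$'s, and one must also argue that the claimed description is stable on a Zariski dense open subset of cycles.

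The easy cases are $d \in \{7,8,9\}$, where one checks that the quadrics $\{q_J\}$ already span $I(X)_2$. Since such $X$ are cut out by quadrics, this forces $Syz(\gamma) = X$, and it reduces to a single rank computation on the matrix expressing the $q_J$'s in a fixed basis of $I(X)_2$. For $d=6$ the same computation produces a scheme strictly larger than $X$, and a primary decomposition reveals the extra component: a single reduced point disjoint from $X$ when $X = \P^2 \times \P^2 \subset \P^8$, and a line meeting $X$ in two points when $X = \P^1 \times \P^1 \times \P^1 \subset \P^7$. Here one needs to verify that the extra point (resp. line) is truly disjoint from (resp. meets) $X$ as claimed, which is again a direct ideal-theoretic check.

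The subtlest case is $d=5$, where $X = \mathbb{G}(1,\P^4) \subset \P^9$ is cut out by the five Pl\"ucker quadrics. I expect the quadrics arising in the syzygy representation of a generic $\gamma$ to all have rank at most $4$, so that $Syz(\gamma)$ decomposes as $X \cup \Lambda$ where $\Lambda \cong \P^5$ is a linear subspace cut out by three Pl\"ucker coordinates that do not appear in a distinguished Pl\"ucker relation $q$, and $q$ restricted to $\Lambda$ carves out the quadric fourfold $X \cap \Lambda$. A cleaner verification would exploit the $SL_5$-equivariance of $\mathcal{K}_{3,2}(I(X),R_1)$: decomposing this Koszul cohomology group into irreducible $SL_5$-representations and picking a highest weight vector should pin down both $\gamma$ and the corresponding syzygy quadrics, at which point the identification of $\Lambda$ becomes transparent.

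Finally, the intersection identity $Syz_{e-1}(X) = X$ follows because $X \subseteq Syz(\gamma)$ for every $\gamma$ (since each $q_J$ lies in $I(X)_2$), while the extra components in the cases $d=5,6$ genuinely move with $\gamma$: choosing two sufficiently general cycles $\gamma, \gamma'$ produces extra components whose scheme-theoretic intersection is empty, as can be read off from the explicit parametrization of the extra components in terms of $\gamma$. The hard part of the argument is the geometric identification of $\Lambda$ in the $d=5$ case, where $\Lambda$ has half the dimension of the ambient $\P^9$ and is not a priori coordinate; the remaining cases reduce to essentially finite-dimensional linear algebra that Macaulay2 handles directly.
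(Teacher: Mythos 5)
Your overall strategy---explicit Macaulay2 computation of the syzygy ideal of a general class, case by case, followed by geometric interpretation of the result---is the same as the paper's, but two of the structural claims you lean on are false, and they are precisely the points where the paper supplies a different (correct) mechanism. First, for $d\in\{7,8,9\}$ you assert that the quadrics $q_J$ appearing in a general $\gamma$ span all of $I(X)_2$, so that $Syz(\gamma)=X$ is automatic because $X$ is cut out by quadrics. In fact, since $X$ is arithmetically Gorenstein its minimal free resolution is self-dual and the matrix $D_{e-1}=d_2\wedge\cdots\wedge d_{e-1}$ can be taken skew-symmetric; consequently a general class involves only $\beta_{1,1}(X)-1=\binom{e+1}{2}-2$ independent quadrics, i.e.\ a hyperplane of $I(X)_2$, never all of it. So the conclusion $Syz(\gamma)=X$ does not follow formally; one must verify computationally that this hyperplane of quadrics still cuts out $X$ (the ideal $I(\gamma)$ is typically non-saturated), which is exactly what the paper's argument records.

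Second, for $d=5$ the mechanism you propose---that the quadrics in the representation have rank at most $4$, forcing a scroll-like decomposition---cannot occur: by Proposition \ref{prop:quintic del Pezzo, rank} one has $\delta(X_6,5)=0$, i.e.\ every quadric in $I(\mathbb{G}(1,\P^4))$ has rank $6$. The residual $\Lambda\cong\P^5$ arises instead because the four Pl\"ucker relations entering a minimal representation of a general class (say those involving the index $0$) all vanish on the coordinate subspace $\{p_{01}=p_{02}=p_{03}=p_{04}=0\}$; note also that $\Lambda$ has codimension $4$ in $\P^9$ and is cut out by four, not three, Pl\"ucker coordinates. With these corrections the remaining steps of your plan (primary decomposition in the $d=6$ cases, and the observation that the residual components move with $\gamma$ so that $\bigcap_\gamma Syz(\gamma)=X$) agree with the paper, which organizes the whole computation around the skew-symmetric presentation $D_{e-1}$ rather than around rank conditions on individual quadrics.
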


\begin{proof}
The statement comes from explicit computations of the syzygy scheme $Syz(\gamma)$. In any case, the syzygy scheme $Syz(\gamma)$ becomes a scheme defined by $\beta_{1,1}(X)-1$ quadrics in $I(X)$. In particular, $Syz(\gamma)$ is a (ideal-theoretic) union of $X$ and an ideal defined by numbers of linear forms in $\mathbb{P}^{n+e}$. We explain briefly why this computations make sense.

Note that $X$ is arithmetically Gorenstein, in particular, $R(X)$ has a ``self-dual'' minimal $R$-free resolution of the form
\[
F_{\bullet} : 0 \to R(-e-2) \stackrel{d_e} \to R(-e)^{\beta_{e-1,1}} \stackrel{d_{e-1}} \to \cdots \stackrel{d_3} \to R(-3)^{\beta_{2,1}} \stackrel{d_2} \to R(-2)^{\beta_{1,1}} \stackrel{d_1} \to R \to 0
\]
so that $\operatorname{coker} d_1 = R(X)$. Here, ``self-dual'' means that the above resolution is symmetric in the sense that the dual complex $F_{\bullet}^{\vee} = \Hom_{R} (F_{\bullet}, R(-e-2))[-e]$ 
is isomorphic to $F_{\bullet}$. 
For $2 \le p \le e-1$, consider the exterior product
\[
D_p = d_2 \wedge d_3 \wedge \cdots \wedge d_p
\]
so that the $\left( \beta_{p,1} \times \beta_{1,1}  \right)$-matrix $D_p$ encodes all the $p$-th syzygies $\mathcal K_{p-1,2}(I(X),R_1)$. In particular, a $p$-th syzygy $\gamma = \sum_{i=1}^{\beta_{1,1}} x_{J_i} \otimes q_i \in \mathcal K_{p-1,2} (I(X), R_1)$ means that $(x_{J_i} \in \wedge^{p-1} R_1)_{i=1}^n$ consisting $\gamma$ lies in the row space of $D_p$ and vice versa.

Since $X$ is a del Pezzo variety, we have a square matrix $D_{e-1}$ which is an $N \times N$ matrix where $N = \beta_{1,1} = \beta_{e-1,1} =  \binom{e+1}{2} -1$ so that the columns of $D_{e-1}$ defines $(e-1)$-th syzygies in $\mathcal K_{e-2,2}(I(X),R_1)$. By taking suitable bases, we observe that $D_{e-1}$ is skew-symmetric, not only in the case of $e=3$ \cite{BE} but also for higher codimension $e \ge 4$. We refer interested readers to \cite[Section 2.5]{R} and \cite[Theorem 1.5, 4.2]{St} for more details on the structure theorem for arithmetically Gorenstein varieties. 


In particular, the $i$-th row ($1 \le i \le \beta_{e-1,1}$) of $D_{e-1}$ indicate an $(e-1)$-th syzygy among the quadrics $q_1, \cdots, \hat{q_i}, \cdots, q_{N} \in I(X)$. A general Koszul class $\gamma \in \mathcal  K_{e-2,2}(I(X),R_1)$ is a linear combination of such rows, and hence also defines the syzygy scheme $Syz(\gamma)$ defined by $N-1 = \binom{e+1}{2} - 2$ quadrics. In most cases, the ideal $I(\gamma)$ generated by these $N-1$ quadrics is not saturated, however, $I(\gamma)$ corresponds to a union of $X$ and the linear subspace defined by linear forms involved in $\gamma$: see \cite{GvB} for an explicit description for the case $d=5$. 
\end{proof}

Now we discuss syzygy schemes arose from arbitrary smooth del Pezzo varieties. Let $\Lambda \subset \P^{n+e}$ be a hyperplane cut out by a linear form $t \in R_1$, let $H=\Lambda \cap X$ be the hyperplane section, and let $W=R_1/ \langle t \rangle$. For a nonzero Koszul class $\gamma \in \mathcal K_{e-1,2} (I(X), R_1)$, we consider its restriction $\gamma |_H \in \mathcal K_{e-2,2} (I(H), W)$. It is clear that $Syz(\gamma) \cap \Lambda \subseteq Syz(\gamma |_H)$, but the equality does not hold in general even when $\mathcal K_{e-2,2} (I(X),R_1) \cong \mathcal K_{e-2,2}(I(H),W)$ which makes hard to describe each $Syz(\gamma |_H )$ explicitly. However, we can still show the following statement.

\begin{proposition}\label{prop:SyzDelPezzo}
Let $X \subset \mathbb{P}^{n+e}$ be a smooth del Pezzo variety of codimension $e \ge 3$. Let $0 \neq \gamma \in \mathcal K_{e-2,2} (I(X), R_1)$ be a nonzero Koszul cohomology class.

\begin{enumerate}
\item If $\gamma$ has rank $e+1$, then $Syz(\gamma)$ is an $(n+1)$-dimensional variety of minimal degree containing $X$. In other words, $\gamma$ is contained in the subset $\mathcal K_{e-2,2}(I(Y), R_1) \subseteq \mathcal K_{e-2,2}(I(X),R_1)$ for some $(n+1)$-dimensional variety of minimal degree $Y \supset X$.

\item $\displaystyle Syz_{e-1} (X)  = \bigcap_{\gamma \in \mathcal K_{e-2,2} (I(X),R_1)} Syz (\gamma) = X$.
\end{enumerate}
\end{proposition}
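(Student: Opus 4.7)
The plan is to address the two claims separately, leveraging the general theory of syzygy schemes and the Gorenstein structure of smooth del Pezzo varieties.

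\smallskip

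For Part (1), the approach parallels the proof of Theorem \ref{thm:uniqueness and syzygy scheme}. A nonzero class $\gamma \in \mathcal{K}_{e-2,2}(I(X),R_1)$ of rank $e+1$ is of the minimal possible rank for an $(e-1)$-st linear syzygy, and by the syzygy-scheme rank lemma \cite[Proposition 3.26]{AN}, such a class is of ``scrollar'' type: after choosing a minimal representation, the $e+1$ quadrics $q_J$ give rise to the $2\times 2$ minors of a $2\times e$ matrix $M$ of linear forms in $R$. The associated rational normal scroll $Y = V(I_2(M))$ then has codimension $e-1$, dimension $n+1$, and degree $e$, so it is an $(n+1)$-dimensional variety of minimal degree; since all the defining quadrics lie in $I(X)_2$, we have $X \subseteq Y$. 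The dimension count $\dim I(Y)_2 = \binom{e}{2}$ together with the inclusion $Syz(\gamma) \subseteq Y$ coming from the rank lemma then yields $Syz(\gamma) = Y$ and hence $\gamma \in \mathcal{K}_{e-2,2}(I(Y),R_1)$.

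\smallskip

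For Part (2), the containment $X \subseteq \bigcap_\gamma Syz(\gamma)$ is immediate since each defining quadric of $Syz(\gamma)$ lies in $I(X)_2$. For the reverse, it suffices to show that the subspace $\Sigma \subseteq I(X)_2$ spanned by all quadrics appearing in minimal representations of nonzero classes $\gamma \in \mathcal{K}_{e-2,2}(I(X),R_1)$ coincides with $I(X)_2$, as then the union of the syzygy ideals generates $I(X)$ (using that $X$ is scheme-theoretically cut out by quadrics, being arithmetically Cohen-Macaulay of regularity two). The plan is to invoke Gorenstein duality: since $X$ is a smooth del Pezzo variety, the Artinian reduction of $R(X)$ is Gorenstein with socle in degree two, producing via Green's duality theorem a perfect Koszul pairing
\[
\mathcal{K}_{e-2,2}(I(X),R_1) \otimes I(X)_2 \longrightarrow \mathbb{K}.
\]
Upon explicit unwinding through the Koszul complex, this perfect pairing forces $\Sigma = I(X)_2$. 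An alternative route is to reduce to the maximum-dimensional case already handled in Proposition \ref{prop:SyzGen}, by viewing a general smooth del Pezzo as a smooth linear section of a maximal-dimensional smooth del Pezzo $X'$ of the same degree (via Theorem \ref{thm:del Pezzo}) and exploiting compatibility of Koszul cohomology with arithmetic-CM-preserving hyperplane sections.

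\smallskip

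The main obstacle will be Part (1): precisely pinning down the matrix $M$ and its scroll $Y$ from a rank-$(e+1)$ Koszul class. The rank lemma produces this structure only after careful Koszul-complex bookkeeping, and one must verify that the resulting scroll has the correct dimension $n+1$ rather than being lower-dimensional or embedded in a proper linear subspace of $\mathbb{P}^{n+e}$. Part (2) is cleaner once the Gorenstein-duality pairing is in place, although explicitly translating the abstract duality into the combinatorial statement about the span of the $q_J$'s still requires some care.
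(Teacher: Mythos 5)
Your proof of part (1) rests on an off-by-one error in the rank that changes the nature of the problem. For a nonzero class $\gamma \in \mathcal K_{e-2,2}(I(X),R_1) \cong \mathcal K_{e-1,1}(R(X),R_1)$ the minimal possible rank is $e$, not $e+1$, and it is the rank-$e$ classes that are of scrollar type and whose quadrics arise as $2\times 2$ minors of a $2\times e$ matrix of linear forms; the paper's own remark following the proposition records exactly this for $e=3$ (scrollar type $=$ rank $3$, while a general class has rank $4=e+1$). The hypothesis of part (1) concerns rank $e+1$, i.e.\ the next case up, whose syzygy scheme is governed by von Bothmer's generic syzygy scheme $Gensyz_3$ (built out of Pl\"ucker quadrics of a Grassmannian) rather than by a $2\times e$ matrix; that such a syzygy scheme is still an $(n+1)$-fold of minimal degree is precisely \cite[Corollary 5.2]{GvB}, which is the entirety of the paper's proof of (1). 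As written, your argument addresses a different (and easier) hypothesis and does not establish the stated case; the dimension issue you flag as the ``main obstacle'' is the least of the problems.

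For part (2) your overall skeleton (show the quadrics occurring in syzygies span $I(X)_2$, then use that $I(X)$ is generated by quadrics) is logically sound, but the key step is only asserted: the existence of a perfect pairing $\mathcal K_{e-2,2}(I(X),R_1)\otimes I(X)_2\to \mathbb{K}$ says nothing by itself about which quadrics appear in minimal representations unless you prove that $\gamma$ pairs to zero with every quadric not needed to represent it --- that unproved compatibility is the whole content. Your fallback reduction to Proposition \ref{prop:SyzGen} also runs the containment the wrong way: the available inclusion is $Syz(\gamma)\cap\Lambda\subseteq Syz(\gamma|_\Lambda)$, which bounds the syzygy schemes of a linear \emph{section} using classes from the ambient variety, not $Syz_{e-1}(X)$ using a larger del Pezzo $X'$ containing $X$ as a linear section. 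The paper's actual proof goes in the opposite direction: it cuts $X$ by a general $\Lambda\cong\P^e$ down to $e+2$ points $Z$ in linearly general position, identifies the Koszul groups via ACM-ness, and then invokes the strong Castelnuovo lemma to produce, for each auxiliary point $Q$, a rational normal curve $C_Q\supset Z$ with $C_Q=Syz(\gamma_Q)$; intersecting over varying $Q$ forces $Syz_{e-1}(Z)=Z$ and hence $Syz_{e-1}(X)=X$. You would need either to supply the missing duality computation or to adopt an argument of this latter type.
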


\begin{proof}
The first statement is just \cite[Corollary 5.2]{GvB}. 

For the second statement, consider a general linear subspace $\Lambda \cong \P^e$ so that $Z = X \cap \Lambda$ is a set of $d=e+2$ points in linearly general position. Since $X$ is arithmetically Cohen-Macaulay, we have $\mathcal K_{e-2,2}(I(X),R_1) = \mathcal K_{e-1,2} (I(Z),W)$ where $W$ is the quotient of $R_1$ by the generators of $\Lambda$ and $Syz(\gamma) \cap \Lambda \subseteq Syz(\gamma |_{Z})$. Hence, $X \cap \Lambda \subseteq Syz_{e-1} (X) \cap \Lambda \subseteq Syz_{e-1}(Z)$.

If we add another point  $Q \in \Lambda$ so that $Z \cup \{Q\}$ is still in linearly general position, then there is a unique rational normal curve $C_Q$ passing through points in $Z$ and $Q$ thanks to the strong Castelnuovo lemma. In particular, there is $\gamma_Q \in \mathcal K_{e-2,2}(I(C_Q), W) \subseteq \mathcal K_{e-2,2} (I(Z), W)$ so that $Syz(\gamma_Q) = C_Q$. By choosing different points $Q$ (and thus different rational normal curves $C_Q$), we conclude that $Z \subseteq Syz_{e-1} (Z) \subseteq \bigcap_{Q} Syz(\gamma_Q) = Z$. In particular, $Syz_{e-1} (X) \cap \Lambda = Syz_{e-1} (Z) = Z$ holds for any choice of general linear subspace $\Lambda \cong \P^e$. We conclude that $Syz_{e-1}(X)=X$ as desired.
\end{proof}

\begin{remark}
\begin{enumerate}
\item If $X$ is contained in an $(n+1)$-fold $Y$ of minimal degree, then $Syz(\gamma)$ can recover $Y$ for suitable choice of $\gamma$. In particular, linear sections of del Pezzo varieties can have $\mu(X)>0$. 

Consider the case when $d=5$ so that $X$ is generated by $5$ quadrics obtained as $4$-Pfaffians of a $5 \times 5$ skew-symmetric matrix. In the case, a nonzero Koszul class $\alpha \in \mathcal K_{e-2,2} (I(X), R_1)$ has rank $ \le 4$, that is, we may find $4$ linear forms in $R_1$ and $4$ quadrics in $I(X)$ so that
\[
\alpha = \ell_1 \otimes q_1 + \cdots + \ell_4 \otimes q_4, \quad \sum_{i=1}^4 \ell_i \cdot q_i = 0.
\]
If there is a nonzero Koszul class $\gamma \in \mathcal K_{1,2}(I(X),R_1)$ of rank $3$ (such a Koszul class is called \emph{of scrollar type}), then $Y=Syz(\gamma)$ becomes an $(n+1)$-fold of minimal degree containing $X$, cf. \cite[Corollary 5.2]{GvB}. In the case, $Y$ is defined as $2$-minors of a $2 \times 3$ matrix of linear form, and thus $I(Y)$ is generated by three quadrics of rank $\le 4$. On the other hand, there are not enough quadrics of rank $\le 4$ in $I(X)$ when $\dim X \ge 3$, so there is no chance to have a Koszul class of scrollar type in $\mathcal K_{1,2}(I(X),R_1)$. 

For instance, if we take a hyperplane section $H$ of $X$ cut out by a linear form $\ell_1$ appeared in the minimal expression for $\alpha$ in the above, we have
\[
\alpha |_H = (\ell_2 \otimes q_2 + \ell_3 \otimes q_3 + \ell_4 \otimes q_4)|_H
\]
so that $\alpha|_H$ is a syzygy of rank $3$. In particular, the Koszul class $\alpha|_H \in \mathcal K_{1,2} (I(H), R_1 / \langle \ell_1 \rangle)$ determines an $n$-fold of minimal degree containing $H = X \cap V(\ell_1)$. However, such a hyperplane section $H$ can be smooth only when $\dim H \le 2$. 
We leave in Section \ref{sect:Grassmannian} for the computation of the number of independent quadrics of given rank in $I(X)$ where $X$ is a general linear section of $\mathbb{G} (1, \mathbb{P}^4)$.

\item Syzygy schemes could explain the cases when $X$ is a smooth del Pezzo variety such that $\mu(X)>0$. Suppose that $\mathcal O_X(1)$ admits a decomposition $\mathcal O_X(1) = A \otimes B$ with $h^0 (A) = r_1 + 1 \ge 2$ and $h^0 (B) = r_2 + 1 \ge 2$. The Green-Lazarsfeld nonvanishing theorem tells us that there is a nonzero Koszul class $0 \neq \gamma \in \mathcal K_{r_1 + r_2 - 1, 1}(X, \mathcal O_X(1))$. It is well-known that a Green-Lazarsfeld class $\gamma$ has rank $\le r_1 + r_2 + 1$, and $\gamma$ is of scrollar type if and only if one can decompose $\mathcal O_X(1) = A \otimes B$ with $h^0(A) = 2$. If $X$ is a blow-up of $\mathbb{P}^2$ at $\le 4$ points embedded anticanonically, then one may take $A$ as $\mathcal O_X (h - e_0)$ where $h$ is the strict transform of a line in $\mathbb{P}^2$ and $e_0$ is an exceptional divisor corresponding to a single point in the blow-up center. If $X \simeq \mathbb{P}^1 \times \mathbb{P}^1$ embedded by $\mathcal O_{\mathbb{P}^1 \times \mathbb{P}^1} (2,2)$, then one may take $A$ as a ruling $\mathcal O_{\mathbb{P}^1 \times \mathbb{P}^1}(1,0)$. On the other hand, the triple Veronese surface $v_3 (\mathbb{P}^2) \subset \mathbb{P}^9$ is embedded by $\mathcal O_X(1) = \mathcal O_{\mathbb{P}^2} (3)$ does not admit a decomposition $\mathcal O_X(1) = A \otimes B$ with $h^0(A) = 2$, so we cannot expect a Green-Lazarsfeld class of scrollar type in this case. We refer interested readers to \cite{Eis88, SS} for more details on these approaches, related problems, and applications. 
\end{enumerate}
\end{remark}

\section{Further discussion on quintic del Pezzo manifolds of dimension $\geq 3$}\label{sect:Grassmannian}
\noindent Let $X_k \subset \P^{k+3}$ be a smooth del Pezzo variety of degree $5$ of dimension $k \geq 3$. Thus $X_k$ is a smooth linear section of the Grassmannian $\mathbb{G} (1,\P^4)$ in $\P^9$. By Theorem \ref{thm:main1}, we have

\begin{theorem}\label{thm:quintic del Pezzo manifolds}
Let $X_k \subset \P^{k+3}$ be as above. Then $\beta_{2,1} (X_k ) > 0$ and $\mu (X_k ) = 0$.
\end{theorem}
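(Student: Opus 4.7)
The plan is to derive both claims directly from results already established earlier in the paper, since $X_k$ is precisely one of the varieties appearing in case $(ii.5)$ of Theorem \ref{thm:main1}. Essentially, Theorem \ref{thm:quintic del Pezzo manifolds} is a special case of Theorem \ref{thm:main1} once we observe that linear sections of the Grassmannian $\mathbb{G}(1,\P^4)$ of dimension $\geq 3$ have Picard number one.

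First I would handle the nonvanishing $\beta_{2,1}(X_k) > 0$. Since $X_k$ is a smooth del Pezzo variety, it is arithmetically Cohen--Macaulay with $d = e+2 = 5$ and $e = 3$. So the Betti number formula \eqref{eq:del Pezzo} applies and gives
\[
\beta_{2,1}(X_k) = 2\binom{4}{3} - \binom{3}{1} = 5,
\]
which is certainly positive. This step is entirely mechanical.

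Next, I would prove $\mu(X_k) = 0$. The cleanest route is to invoke Lemma \ref{lem:dim and codimension inequalities}$(2)$: if $\mu(X_k) \geq 1$, then the Picard number of $X_k$ is at least two. However, by the Lefschetz hyperplane theorem, for a smooth linear section of $\mathbb{G}(1,\P^4)$ of dimension $k \geq 3$, the restriction $\operatorname{Pic}(\mathbb{G}(1,\P^4)) \to \operatorname{Pic}(X_k)$ is an isomorphism, so $\operatorname{Pic}(X_k) \cong \Z$ has rank one. This gives a contradiction and hence $\mu(X_k) = 0$. Alternatively, one can simply quote the implication $(iii) \Rightarrow (i)$ of Theorem \ref{thm:main1}: since $X_k$ is a smooth del Pezzo variety whose Picard number $1$ is at most $k-1 = n-1$ (using $k \geq 3$), we conclude directly that $\mu(X_k) = 0$ and $\beta_{2,1}(X_k) \neq 0$.

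There is no real obstacle: the theorem is a direct consequence of the classification machinery developed in Sections \ref{sect:VAMD}--\ref{sect:MainThmProof}, combined with the standard Picard-number computation for Grassmannian linear sections. The only point to be a bit careful about is the hypothesis $k \geq 3$, which is what forces the Lefschetz theorem to apply and makes Picard number one insufficient for the embedding into a fourfold of minimal degree. The subsequent material in Section \ref{sect:Grassmannian} (Proposition \ref{prop:quintic del Pezzo, rank}) then gives a more refined, rank-of-quadrics based reproof that contains additional information; but for Theorem \ref{thm:quintic del Pezzo manifolds} itself, the short proof outlined above suffices.
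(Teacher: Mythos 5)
Your proposal is correct and follows essentially the same route as the paper: the paper's proof of this theorem is precisely ``By Theorem \ref{thm:main1}'', whose relevant ingredient for the case $(ii.5)$ is Corollary \ref{cor:delta zero}, i.e.\ Lemma \ref{lem:dim and codimension inequalities}$(2)$ combined with the fact that $\operatorname{Pic}(X_k)\cong\Z$ for $k\ge 3$, exactly as you argue, and your computation $\beta_{2,1}(X_k)=2\binom{4}{3}-\binom{3}{1}=5>0$ via \eqref{eq:del Pezzo} matches the paper's use of that formula. The rank-of-quadrics argument you mention is indeed only the paper's alternative proof and is not needed for the statement itself.
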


Lemma \ref{lem:dim and codimension inequalities} gives three reasons why the statement of Theorem \ref{thm:quintic del Pezzo manifolds} is true. Namely, each of the following three properties of $X_k$ is an obstruction of the existence of a $(k+1)$-fold of minimal degree which contains $X_k$.\\

\begin{enumerate}
\item[$(i)$] ${\rm codim} (X_k) = 3 < k+1$;
\item[$(ii)$] The Picard number of $X_k$ is equal to one;
\item[$(iii)$] $k \geq 3$ and ${\rm codim} (X_k) = 3$ is odd.  \\
\end{enumerate}

\noindent Along this line, the main purpose of this section is to give alternative proof of the fact that $\mu (X_k ) = 0$ for all $3 \leq k \leq 6$ using the following rank property of the quadratic polynomials in the homogeneous ideal of $X_k$.

\begin{definition}
Let $X\subset \mathbb{P}^r$ be a non-degenerated projective irreducible variety. For each $t \geq 3$, we define $\delta(X,t)$ as the maximum number of linearly independent quadrics of rank $\leq t$ in $I(X)_2$.
\end{definition}

\begin{proposition}\label{prop:quintic del Pezzo, rank}
Let $X_k \subset \P^{k+3}$ be a smooth del Pezzo variety of degree $5$ of dimension $k \geq 1$. Then

\renewcommand{\descriptionlabel}[1]%
             {\hspace{\labelsep}\textrm{#1}}
\begin{description}
\setlength{\labelwidth}{13mm} \setlength{\labelsep}{1.5mm}
\setlength{\itemindent}{0mm}
\item[{$(1)$}] $\delta (X_1 , 3) = 5$.

\item[{$(2)$}] $\delta (X_2 , 3) = 0$ and $\delta (X_2 , 4 ) = 5$.

\item[{$(3)$}] $\delta (X_3 , 4) = 0$ and $\delta (X_3 , 5 ) = 5$.

\item[{$(4)$}] $\delta (X_4 , 4) = 0$, $\delta (X_4 , 5 ) = 3$ and $\delta (X_4 , 6 ) = 5$.

\item[{$(5)$}] $\delta (X_5 , 4) = 0$, $\delta (X_5 , 5 ) = 1$ and $\delta (X_5 , 6 ) = 5$.

\item[{$(6)$}] $\delta (X_6 , 5 ) = 0$ and $\delta (X_6 , 6 ) = 5$.
\end{description}
\end{proposition}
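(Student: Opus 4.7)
The plan is to realize each $X_k$ as a smooth linear section of $\mathbb{G}(1,\mathbb{P}^4)$ and analyze the ranks of the quadrics in $I(X_k)_2$ through the Pl\"ucker description. Write $V = \mathbb{K}^5$ so that $\mathbb{G}(1,\mathbb{P}^4) \subset \mathbb{P}^9 = \mathbb{P}(\wedge^2 V)$, and parametrize the $5$-dimensional space of Pl\"ucker quadrics by $v \in V$: each $v$ gives the quadric $P_v$ with bilinear form $B_v(\omega,\omega') = \omega \wedge \omega' \wedge v \in \wedge^5 V \cong \mathbb{K}$, which has rank $6$ on $\wedge^2 V$ and kernel $v \wedge V$ of dimension $4$. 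Writing $X_k = \mathbb{G}(1,\mathbb{P}^4) \cap \Lambda$ with $\Lambda = A^\perp$ for a subspace $A \subset \wedge^2 V^*$ of dimension $6-k$, the restrictions $P_v|_\Lambda$ span $I(X_k)_2$ and satisfy
\[
\operatorname{rank}(P_v|_\Lambda) = (k+4) - \dim\bigl(\Lambda \cap B_v^{-1}(A)\bigr),
\]
so the rank computation reduces to a purely linear-algebraic question about $A$ and $v$.

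Next I would extract the relevant geometric invariant. The key quantity is $m(v) := \dim(A \cap \operatorname{im} B_v)$ where $\operatorname{im} B_v = \{\alpha \in \wedge^2 V^* : \iota_v \alpha = 0\}$; it counts independent $\alpha \in A$ whose radical contains $v$. The ``kernel variety'' $K(A) := \{v \in V : m(v) \geq 1\}$ is the image of the quadratic morphism $\mathbb{P}(A) \to \mathbb{P}(V)$, $[\alpha] \mapsto [\ker \alpha]$, given explicitly by the five Pfaffians of the $5 \times 5$ skew matrix representing $\alpha$. For $v \notin K(A)$ the rank of $P_v|_\Lambda$ attains its generic value, while for $v \in K(A)$ it drops by one or two, depending on a secondary ``isotropy'' condition $\alpha_1(\omega_0) = 0$, where $\omega_0 \in B_v^{-1}(\alpha_1)$ is a lift of a generator $\alpha_1$ of $A \cap \operatorname{im} B_v$. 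The invariant $\delta(X_k, t)$ is then the dimension of the linear span in $V$ of the rank-$\le t$ stratum.

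I would then treat the six cases in turn. For $k=6$ the statement is trivial as $A = 0$. For $k=5$, $K(A) = \ker \alpha$ is a single line, and the explicit normal form $\alpha = e_0^* \wedge e_1^* + e_2^* \wedge e_3^*$, $v = e_4$, with lift $\omega_0 = e_0 \wedge e_1 + e_2 \wedge e_3$, gives $\alpha(\omega_0) = 2 \ne 0$, so the rank drops to exactly $5$, yielding $\delta(X_5, 5) = 1$ and $\delta(X_5, 4) = 0$. For $k=4$, $K(A)$ is a rational conic spanning a plane in $\mathbb{P}^4$, producing a $3$-dimensional span. For $k=3$ and $k=2$, $K(A)$ is respectively a nondegenerate Veronese-type surface and a nondegenerate hypersurface in $\mathbb{P}^4$, both spanning the full $\mathbb{P}^4$, so the rank-$\le 5$ (resp.\ rank-$\le 4$) loci give $\delta = 5$. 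For $k=1$ I would invoke the classical Klein-type normal form of the elliptic normal quintic, whose five generating Pfaffians appear as $x_i^2 - x_{i-1} x_{i+1}$ (indices modulo $5$), each visibly of rank $3$.

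The main obstacle is the set of ``zero'' statements $\delta(X_k, t-1) = 0$: one must rule out rank drops of size two beyond those claimed. This reduces to verifying that the isotropy condition $\alpha_1(\omega_0) \ne 0$ holds on a dense open subset of $K(A)$, a genericity statement that is controlled by the smoothness of $X_k$ and has to be checked case-by-case in explicit coordinates. A secondary difficulty in the lower-codimension cases ($k = 2, 3$) is to confirm that the image of the quadratic Pfaffian map $\mathbb{P}(A) \to \mathbb{P}(V)$ is nondegenerate, so that the span of $K(A)$ is indeed all of $\mathbb{P}^4$; this again follows from transversality hypotheses implicit in the smoothness of the linear section.
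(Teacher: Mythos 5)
Your linear--algebraic framework --- computing $\operatorname{rank}(P_v|_\Lambda)=(k+4)-\dim\bigl(\Lambda\cap B_v^{-1}(A)\bigr)$ and stratifying $\P(V)$ by $m(v)=\dim(A\cap\operatorname{im}B_v)$ --- is coherent and genuinely different from the paper's proof, which writes the five generators in explicit coordinates (Pfaffians for $k=2$, restricted Pl\"ucker relations for $k\ge 3$) and computes the radicals $\sqrt{I(t+1,M_k)}$ with Macaulay2, citing \cite{P2022} for $k=1$ and \cite{MP} for $\delta(X_6,5)=0$. But as written your plan has gaps that would block the conclusion. The most serious one concerns the vanishing statements: $\delta(X_k,t)=0$ means the rank-$\le t$ locus is \emph{empty}, so your proposal to verify the isotropy condition $\alpha_1(\omega_0)\ne 0$ ``on a dense open subset of $K(A)$'' proves nothing --- a single point of the conic $K(A)$ where the extra drop occurs already gives $\delta(X_4,4)\ge 1$. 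The condition must be checked at every point of $K(A)$ (and one must also rule out the deeper stratum $m(v)\ge 2$, which your sketch never mentions).

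The case $k=2$ is also mishandled. Since $\operatorname{rank}(P_v|_\Lambda)\le 6-m(v)$ with generic value $6$, membership in $K(A)$ only forces rank $\le 5$; the rank-$\le 4$ locus is the strictly smaller subset of the hypersurface $K(A)$ where either $m(v)\ge 2$ or the excess/isotropy condition holds, so the nondegeneracy of $K(A)$ does not yield $\delta(X_2,4)=5$, and your sketch says nothing at all about why $\delta(X_2,3)=0$. (The positive half is easiest to get as the paper does: the five explicit Pfaffian generators each visibly have rank $4$.) Finally, for $k=1$ a single normal form cannot suffice, because smooth elliptic normal quintics form a one-parameter family of projective equivalence classes; moreover the cyclic quadrics $x_i^2-x_{i-1}x_{i+1}$ do not cut out a general smooth member (the smooth Heisenberg-invariant curves need all three terms $x_i^2$, $x_{i+1}x_{i+4}$, $x_{i+2}x_{i+3}$, and then each generator has rank $5$). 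That every elliptic normal curve of degree $\ge 4$ is cut out by rank-$3$ quadrics is a nontrivial theorem, which is exactly what the paper cites. The remaining unproved assertions (that the Pfaffian image spans a plane for $k=4$ and all of $\P^4$ for $k=2,3$, and that smoothness of $X_k$ forces every $\alpha\in A$ to have rank $4$) are plausible but would still need explicit verification, at which point one is essentially back to the coordinate computations the paper performs.
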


\begin{proof}
Note that $I(X_k )$ is minimally generated by five quadratic polynomials, say $Q_1 , \ldots , Q_5$. Thus a general member $Q$ of $I(X_k )_2$ is of the form
\begin{equation*}
Q = x_1 Q_1 + \cdots + x_5 Q_5 \quad \mbox{for some} \quad x_1 , \ldots , x_5 \in \mathbb{K}.
\end{equation*}
We identify $\P (I(X)_2 )$ with $\P^4$ by sending $[Q]$ to $[x_1 , \ldots , x_5 ]$. Now, let $M_k$ be the $(k+4) \times (k+4)$ symmetric matrix associated with $Q$. Its entries are linear forms in $x_1 , \ldots , x_5$. For each $1 \leq t \leq r$, we define the locus
\begin{equation*}
\Phi_t (X_k ) := \{ [Q] ~|~ Q \in I(X_k )_2 - \{0\}, ~\mbox {rank} (Q) \leq t \} \subset \P ( I(X_k )_2 ) = \P^4
\end{equation*}
of all quadratic polynomials of rank $\leq t$ in the projective space $\P^4$. Thus $\Phi_t (X_k )$ is a projective algebraic set in $\P^4$ and its homogeneous ideal is the radical of $I(t+1,M_k )$. In particular, $\delta(X_k ,t)$ is one more than the dimension of the linear subspace of $\P^4$ spanned by $\Phi_t (X_k )$. Therefore it holds that
$$\delta(X_k ,t) = 5 - (\mbox{the maximal $\sharp$ of linearly independent linear forms in} \sqrt{I(t+1,M_k )}).$$

For $(1)$, we refer the reader to \cite[Theorem 1.1]{P2022} which addresses that every elliptic normal curve $\mathcal{C} \subset \P^n$ of degree $\geq 4$ has the property that its homogeneous ideal is generated by quadratic polynomials of rank $3$. Since $X_1 \subset \P^{4}$ is an elliptic normal curve of degree $5$, it holds that $\delta (X_1 , 3) = 5$.\\

For $(2)$, $X_2\in \mathbb{P}^5$ be a smooth del Pezzo surface of degree $5$. There is up to isomorphism only one such surface, given by blowing up the projective plane in $4$ points with no $3$ on a line. Let $\mathbb{K}[z_0\ldots z_5]$ be the coordinate ring of $\mathbb{P}^5$. It is known that the generators of $I(X_2)$ are given by the $5$ Pfaffians of the $4\times 4$ diagonally symmetric submatrices of the matrix
$$\begin{pmatrix}
0 & -z_0+z_1 & -z_1 & z_1-z_5 & z_5\\
z_0-z_1 & 0 & -z_2 & -z_5 & z_5\\
z_1 & z_2 & 0 & z_2 & -z_3\\
-z_1+z_5 & z_5 & -z_2 & 0 & z_4\\
-z_5 & -z_5 & z_3 & -z_4 & 0
\end{pmatrix},$$
which is 
\begin{align*}
& Q_1=-z_{0}z_{2}+(-z_{1}+z_{2})z_{5} &Q_2=(z_{0}-z_{1})z_{3}+(z_{1}-z_{2})z_{5},\\
& Q_3=(-z_{0}+z_{1})z_{4}-z_{1}z_{5} & Q_4=(z_{3}-z_{4})z_{1}+(z_{2}-z_{3})z_{5},\\
& Q_5=(-z_{4}+z_{5})z_{2}-z_{3}z_{5}
\end{align*}
Let $Q=x_1Q_1+\cdots+x_5Q_5$ be a quadric element in $I(X_2)$. Using \emph{Macaulay2}, we can show that 
$$\sqrt{I(4,M_2)}=(x_1,x_2,x_3,x_4,x_5).$$
Hence $\delta(X_2,3)=5-5=0$. Since we can easily check that each generator $Q_i$ has rank $4$, we can show that $\delta(X_2,4)=5$.

For $(3) \sim (6)$, we compute $\delta (X_k , t)$ using \emph{Macaulay2}. First, let $\{p_{ij} ~|~ 0 \le i < j \le 4 \}$ be the set of homogeneous coordinates of $\P^9$. Then the homogeneous ideal of $\mathbb{G}(1,\mathbb{P}^4)$ is generated by the following five Pl\"{u}cker relations
\begin{equation*}
Q(i,j,k,l) = p_{ij} p_{kl} - p_{ik} p_{jl} + p_{jk} p_{il}  \quad (0 \le i < j < k < l \le 4).
\end{equation*}
In particular, $\delta (X_6 , 6) = 5$. On the other hand, $\delta (X_6 , 5) = 0$ by \cite[Theorem 5.1]{MP}.

Now, let $H_1$, $H_2$ and $H_3$ be hyperplanes of $\P^9$ which are defined respectively by
\begin{equation*}
p_{34}-p_{01}-p_{02} , ~ p_{24}-p_{03}-p_{04} \quad \mbox{and} \quad p_{23}-p_{12}-p_{13}-p_{14} .
\end{equation*}
Then one can check that $X_5 := \mathbb{G} (1,\mathbb{P}^4) \cap H_1$, $X_4 := X_5 \cap H_2$ and $X_3 := X_4 \cap H_3$ are all irreducible and smooth. Also the homogeneous ideal of $X_k$ is generated by the restrictions of the above five Pl\"{u}cker relations to the projective spaces $\langle X_k \rangle = \P^{k+3}$. In particular, it holds that $\delta (X_k , 6) = 5$ for all $k \geq 2$ since $\delta (X_6 , 6) = 5$.

We can compute $\delta (X_k , t)$ by using these five quadratic polynomials. For example, the homogeneous ideal of $X_5$ is generated by $Q^{(1)}(i,j,k,l)$'s which are the substitution of the Pl\"{u}cker relations by $p_{34}=p_{01}+p_{02}$. Let
\begin{equation*}
Q^{(1)} = \sum_{0 \le i < j < k < l \le 4} x^{(1)}_{i,j,k,l} Q^{(1)}(i,j,k,l)
\end{equation*}
be a quadric element in the homogeneous ideal of $X_5$. Using Macaulay2, we can show that
$$\sqrt{I(6,M_5 )} =(x_{0123}^{(1)},x_{0124}^{(1)},x_{0234}^{(1)}-x_{0134}^{(1)},x_{1234}^{(1)}).$$
Since it has $4$ linearly independent linear form, we conclude that $\delta (X_5 , 5) = 1$.

All the remaining results can be obtained by similar method.
\end{proof}

\noindent {\bf Alternative proof of Theorem \ref{thm:quintic del Pezzo manifolds}.}
Suppose that $k \geq 3$ and $X_k$ is contained in a $(k+1)$-dimensional variety $Y$ of minimal degree. Then $Y$ must be a rational normal scroll and hence $I(Y)$ is generated by quadratic polynomials of rank at most $4$. Since $I(X_k )$ contains $I(Y)$, it follows that
$$\delta (X_k , 4 ) \geq \delta (Y , 4 ) \geq 1.$$
On the other hand, $\delta (X_k , 4 ) = 0$ by Proposition \ref{prop:quintic del Pezzo, rank}. This completes the proof that there does not exist such a variety $Y$ and hence $\mu (X_k ) = 0$.  \qed


\ \\
\section*{ Statements and Declarations}
\ \\
{ \bf Acknowledgement : } 
Y. K.  thanks Frank-Olaf Schreyer for helpful discussion, particularly on the analysis for computational results in Proposition \ref{prop:SyzGen}. The first named author was supported by the Basic Science Program of the NRF of Korea (NRF-2022R1C1C1010052). The second named author was supported by a KIAS Individual Grant(MG083101) at Korea Institute for Advanced Study. The third named author was supported by the National Research Foundation of Korea(NRF) grant funded by the Korea government(MSIT) (No. 2022R1A2C1002784).
\ \\
{\bf Competing Interests : }The authors declare that they have no known competing financial interests or personal relationships that could have appeared to influence the work reported in this paper.


\end{document}